\newcommand{\SL}{\operatorname{SL}}
\newcommand{\C}{\mathbb{C}}
\newcommand{\Z}{\mathbb{Z}}
\newcommand{\F}{\mathbb F}
\newcommand{\Q}{\mathbb Q}
\newcommand{\Id}{\mathrm{Id}}
\newcommand{\bma}{\begin{pmatrix}}
\newcommand{\ema}{\end{pmatrix}}
\newcommand{\bsm}{\left(\begin{smallmatrix}}
\newcommand{\esm}{\end{smallmatrix}\right)}
\newcommand{\Tr}{\operatorname{Tr}}
\newcommand{\tor}{\operatorname{tor}}
\newcommand{\Tor}{\operatorname{Tor}}
\newcommand{\Frac}{\operatorname{Frac}}
\newtheorem{theorem}{Theorem}[section]
\newtheorem{corollary}[theorem]{Corollary}
\newtheorem{lemma}[theorem]{Lemma}
\newtheorem{proposition}[theorem]{Proposition}
\theoremstyle{definition}
\newtheorem{example}[theorem]{Example}
\newtheorem{remark}[theorem]{Remark}
\newcommand*\linenomathpatch[1]{%
  \cspreto{#1}{\linenomath}%
  \cspreto{#1*}{\linenomath}%
  \csappto{end#1}{\endlinenomath}%
  \csappto{end#1*}{\endlinenomath}%
}
\newcommand*\linenomathpatchAMS[1]{%
  \cspreto{#1}{\linenomathAMS}%
  \cspreto{#1*}{\linenomathAMS}%
  \csappto{end#1}{\endlinenomath}%
  \csappto{end#1*}{\endlinenomath}%
}
 \let\linenomathAMS\linenomathWithnumbers
 \patchcmd\linenomathAMS{\advance\postdisplaypenalty\linenopenalty}{}{}{}
  \let\linenomathAMS\linenomathNonumbers
\patchcmd{\mmeasure@}{\measuring@true}{
  \measuring@true
  \ifnum-\linenopenaltypar>\interdisplaylinepenalty
    \advance\interdisplaylinepenalty-\linenopenalty
  \fi
  }{}{}
\newcommand{\ol}{\overline}
\newcommand{\wt}[1]{{\widetilde{#1}}}
\newcommand{\mf}[1]{{\mathfrak{#1}}}
\newcommand{\mca}[1]{{\mathcal{#1}}}
\newcommand{\bs}[1]{{\boldsymbol{#1}}}
\newcommand{\To}{\longrightarrow}
\newcommand{\surj}{\twoheadrightarrow}
\newcommand{\pmx}[1]{\begin{pmatrix}#1\end{pmatrix}}
\newcommand{\spmx}[1]{{\small \pmx{#1}}}
\subjclass[2020]{Primary 
57K10, 11R23; 
Secondary 57K31, 11S05
} 
\keywords{knot, link, 3-manifold, non-acyclic representation, character variety, Reidemeister torsion, universal deformation, $L$-function, arithmetic topology.}
\title[Non-acyclic $\SL_2$-representations of twisted Whitehead links]
{\large 
Multiplicity of non-acyclic $\SL_2$-representations and \\ $L$-functions of 
the odd-twisted Whitehead links}
\author{L\'{e}o B\'{e}nard} 
\email{leo.benard@univ-amu.fr}
\address{Aix Marseille Universit\'{e}, CNRS, I2M, Marseille, France, Site de Saint-Charles 3 place Victor Hugo, Case 19\\
13331 Marseille c\'{e}dex 3, France}
\author{Ryoto Tange} 
\email{rtange.math@gmail.com}
\address{Department of Mathematics, Rikkyo University\\
 3-34-1 Nishi-Ikebukuro, Toshima-ku, 171-8501, Tokyo, Japan}
\author{Anh T. Tran} 
\email{att140830@utdallas.edu}
\address{Department of Mathematical Sciences, The University of Texas at Dallas\\ 
Richardson, TX 75080, USA} 
\author{Jun Ueki} 
\email{uekijun46@gmail.com}
\address{Department of Mathematics, Faculty of Science, Ochanomizu University; 
2-1-1 Otsuka, Bunkyo-ku, 112-8610, Tokyo, Japan}
\begin{document}

\newenvironment{nouppercase}{%
\let\uppercase\relax%
\let\MakeUppercase\relax%
\renewcommand{\uppercasenonmath}[1]{}}{}

\begin{nouppercase}
\maketitle 
\end{nouppercase}


\begin{abstract} 
We study the divisor of the Reidemeister torsion on the variety of irreducible $\SL_2\C$-characters of certain knots and links, and provide a geometric interpretation of them. 
We focus in particular on the family of odd-twisted Whitehead links $W_{2n-1}$ and prove that these divisors have multiplicity two. 
Furthermore, we apply these results to the study of the $L$-functions of the universal deformations of representations over fields with characteristic $p>2$ of these link groups. 
\end{abstract} 

\tableofcontents

\section{Introduction}

\subsection{The divisor of the Reidemeister torsion} 

In this paper, we investigate a certain divisor on the variety of irreducible $\SL_2$-characters of a 3-manifold in a certain family, unifying the perspectives of our previous studies \cite{Benard2020OJM, TTU}. 

\emph{The ${\rm SL}_2\C$-character variety} $X(M)$ of a manifold $M$ is an algebraic variety, which may be seen as the space of the conjugacy classes of representations $\rho:\pi_1(M)\to {\rm SL}_2\C$ of the fundamental group. 
A representation $\rho:\pi_1(M)\to {\rm SL}_2\C$ is often \emph{acyclic}, that is, the homology $H_\ast(M,\rho)$ of the twisted complex $C_\ast(M,\rho)$ is trivial. 
For instance, if $M$ is a hyperbolic 3-manifold or a Seifert-fibered space, then 
the classes of acyclic representations form a non-empty Zariski open subset of  $X(M)$. 
If $\rho$ is acyclic, one can define a numerical invariant  $\tau_\rho(M)\in \C^\ast$ called \emph{the Reidemeister torsion}  of the pair $(M,\rho)$.
This extends to a rational function $\tau$ called \emph{the torsion function} on $X(M)$, and, on the subvariety $X^{\rm irr}(M)$ of irreducible characters, the zero locus of $\tau$ coincides with the set of the classes of \emph{non-acyclic representations}. 
Our main concern will be this zero locus with the information of multiplicity, namely, \emph{the divisor} of $\tau$. 

In \cite{Benard2020OJM}, the first author studied the case of a knot exterior in $S^3$, 
and exhibited a criterion for $\tau$ being a non-constant regular function on the affine variety. 
In particular, he gave a sufficient condition for the zero locus of $\tau$ being not void.

In \cite{TTU}, the last three authors studied the exteriors of \emph{twist knots} $J(2,2l)$ with $n\in \Z$ from this perspective. 
Every twist knot has a one-dimensional variety of irreducible ${\rm SL}_2\C$-characters, and the torsion function $\tau$ has a finite number of zeros there. 
Interestingly, they proved that \emph{all these zeros appear with vanishing multiplicity two} \cite[Theorem B]{TTU}. 
In addition, they suggested a geometric interpretation for non-acyclic representations. Namely, they proved that an irreducible ${\rm SL}_2\C$-representation of $\pi_1(S^3-J(2,2l))$ is non-acyclic if and only if it factors through the $-3$-Dehn surgery and the image of a specific element has order 3 \cite[Theorems C, D]{TTU}.  

In this paper, we extend these observations to explore theoretical backgrounds, further geometric interpretations, and arithmetic implications of the ``multiplicity two'' phenomenon. 

\subsection{The Whitehead link $W$ and the $(-3,-3)$-surgery} 
A clue will be the fact that every twist knot exterior $S^3-J(2,2l)$ is the result $W(-1/l)$ of the $-1/l$-filling of $S^3-W$ on a component of the Whitehead link $W$ (\cref{fig:Whitehead}). 
Note that the character variety and the torsion function of $W$ have been thoroughly studied in \cite{Tran2016JKTR.CV, Tran2018IJM, NguyenTran}.  

Let $W(-3,-3)$ denote the result of the $(-3,-3)$-surgery along $W$. 
Then the variety $X^{\rm irr}(W(-3,-3))$ of irreducible ${\rm SL}_2\C$-characters of $W(-3,-3)$ 
may be regarded as a 1-dimensional irreducible subvariety of $X^{\rm irr}(S^3-W)$. 
We will establish the following. 
\begin{theorem} \label{thm.W} 
\label{theo:main} 
The divisor $D(\tau_W)$ of the torsion function $\tau_W$ on the variety $X^{\rm irr}(S^3-W)$ of irreducible ${\rm SL}_2\C$-characters of the Whitehead link $W$ satisfies 
\[D(\tau_W)=2 \, X^{\rm irr}(W(-3,-3)).\] 
\end{theorem}
This means that the divisor of $\tau_W$  has multiplicity two, and that an irreducible ${\rm SL}_2\C$-representation of $\pi_1(S^3-W)$ is non-acyclic if and only if it factors through 
the homomorphism $\pi_1(S^3-W)\surj \pi_1(W(-3,-3))\cong \Gamma(3,3,\infty)$. 

Since this space $W(-3,-3)$ is homeomorphic to the connected sum $L(3,1)\# L(3,1)$ of two lens spaces, by verifying that the (non)-acyclicity behaves well under the fillings (Lemmas \ref{lem:MV}, \ref{lem:intermult}), we may deduce the following remarkable assertion. 

\begin{corollary} \label{cor:-3} 
All irreducible ${\rm SL}_2\C$-characters of $L(3,1)\# L(3,1)$ are non-acyclic. 
\end{corollary} 

\subsection{Exceptional fillings}
Note in addition that the result $W(-3)$ of the $-3$-filling along a component of $W$ is a Seifert fibered manifold with boundary (see also \cite[Table 1]{MP}) satisfying $\pi_1(W(-3))\cong (\Z/3 \ast \Z/3)\rtimes \Z$. 
Indeed, $W(-3)$ is a circle bundle over a 2-dimensional orbifold with two conical points of order 3 and one cusp, obtained by gluing two hyperbolic triangles with angles $(\pi/3,\pi/3,\infty)$ along their edges. 

The observation on $W$ leads us to prove the following theorem, which partially explains the general nature behind it. 
\begin{theorem} \label{thm.SF} 
Let $M$ be a 3-manifold with cusps, $N$ a Seifert fibered manifold obtained by a filling along a component of a boundary component of $M$. 
Let $\rho:\pi_1(M)\to {\rm SL}_2\C$ be an irreducible representation that factors through $\pi_1N$. 
If $\rho$ is non-trivial on the surgered component of $\partial M$ and $\rho$ sends a generic fiber to $I_2$, then $\rho$ is non-acyclic. 
\end{theorem} 
According to Thurston, when $M$ is hyperbolic, such a filling is \emph{exceptional}, in the sense that a generic filling yields a hyperbolic manifold. A generic fiber of $N$ is central in $\pi_1N$, so any irreducible representation $\rho$ sends a generic fiber to $\pm I_2$. 

We see that any non-acyclic representation of $\pi_1(S^3-W)$ factors through the $-3$-surgery on a component, and the $-3$-surgery on the other component kills the homotopy class of the fiber. 
So, any representation which factors through the $(-3,-3)$-filling satisfies the hypothesis of \Cref{thm.SF}. 

\subsection{Twist knots and once-punctured tori} 
We also prove that the multiplicity of zeros of the torsion function behaves well under a filling (\cref{lem:intermult}). By \Cref{thm.W}, we obtain the following. (See \Cref{ss.surgeries} for our convention of the fillings.) 
\begin{corollary} \label{cor:mult2Wr} 
Let $-3\neq r\in \Q\cup\{\infty\}$ and let $W(r)$ denote the result of the filling of $S^3-W$ along a component of $W$ with slope $r$. 
Then the divisor $D(\tau)$ of the torsion function on $X^{\rm irr}(W(r))$ has multiplicity at least two. 
\end{corollary} 
If $l\in \Z$, then $W(l)$ is a once-punctured torus bundle with tunnel number 1, which has been deeply studied in \cite{BakPet}, whereas $W(-1/l)=S^3-J(2,2l)$ is the exterior of a twist knot. 
The assertion without ``at least'' for twist knots \cite[Theorem B]{TTU} further implies that $X^{\rm irr}(W(-3,-3))$ and $X^{\rm irr}(S^3-J(2,2l))$ transversely intersect on $X^{\rm irr}(S^3-W)$. 
We may re-interpret \cite[Theorems C, D]{TTU} as well. 
Furthermore, we obtain the following. 

\begin{corollary} \label{cor.SU2}
Every non-acyclic irreducible representation $\rho$ of a twist knot $J(2,2l)$ factors through
the triangle group $\Gamma(3,3,|3l-1|)$. 
\end{corollary} 

\subsection{Odd-twisted Whitehead links $W_{2n-1}$}
We further study the family of odd-twisted Whitehead links $W_{2n-1}$ with $n\in \Z$ (see \cref{fig:TwistedW}). 
These links are hyperbolic and have linking numbers zero. 
Their ${\rm SL}_2\C$-character varieties $X(W_{2n-1})$ and the torsion functions $\tau$ have been computed by a series of studies by the third author and Nguyen \cite{Tran2016JKTR.CV, Tran2018IJM,NguyenTran}. 
In particular, each of the varieties has a specific component called \emph{the geometric component}, namely, the component containing the class of holonomy representations. 

We first verify the following with slight effort. 
\begin{proposition}
\label{prop:smooth}
Each component of the $\SL_2\C$-character variety of the odd-twisted Whitehead link $W_{2n-1}$ is a smooth surface in $\C^3$.
\end{proposition}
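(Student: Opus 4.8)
The plan is to work entirely from the explicit description of the character variety obtained by Nguyen and the third author in \cite{NguyenTran}. Since the link group $\pi_1(S^3\setminus W_{2n-1})$ is generated by two meridians $a,b$, a character is determined by the three trace coordinates
\[
x=\Tr\rho(a),\qquad y=\Tr\rho(b),\qquad z=\Tr\rho(ab),
\]
and the character variety is realised as a hypersurface $V(\Phi_n)\subset\C^3$ for an explicit trace polynomial $\Phi_n\in\C[x,y,z]$. A hypersurface cut out by a nonzero polynomial is automatically of pure dimension two, so each irreducible component is \emph{a priori} a surface; the real content is \emph{smoothness}. I would therefore reduce the proposition to the Jacobian criterion. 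Writing $\Phi_n=\prod_i\Phi_{n,i}$ as a product of its distinct irreducible factors (after checking that $\Phi_n$ is squarefree, so that the reduced variety matches), it suffices to prove two things: first, that for each factor the system $\Phi_{n,i}=\partial_x\Phi_{n,i}=\partial_y\Phi_{n,i}=\partial_z\Phi_{n,i}=0$ has no solution in $\C^3$; and second, that the factors are pairwise coprime, so that distinct components do not meet (an intersection point would be singular on the union).

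The key to controlling the partial derivatives is the Chebyshev structure of $\Phi_n$. The defining polynomial is assembled from the polynomials $S_k=S_k(u)$ determined by $S_{-1}=0$, $S_0=1$, $S_{k+1}=u\,S_k-S_{k-1}$, evaluated at a trace expression $u=u(x,y,z)$. I would make this $u$-dependence explicit and single out the variable — most naturally $z$, the trace of the product — in which $\Phi_{n,i}$ depends most simply. The mechanism I expect to exploit is the determinantal (``Cassini'') identity
\[
S_k^2-S_{k+1}S_{k-1}=1,
\]
which shows that consecutive Chebyshev values never vanish simultaneously. Concretely, I would combine the four equations of the singular system, using $\Phi_{n,i}=0$ to cancel the top-degree terms in the partials, and aim to collapse the system to a relation of the form $S_k^2-S_{k+1}S_{k-1}=0$, contradicting the identity above. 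This is the algebraic heart: finding the right $\C$-linear (or syzygetic) combination of $\partial_x\Phi_{n,i},\partial_y\Phi_{n,i},\partial_z\Phi_{n,i}$ that, modulo $\Phi_{n,i}$, reduces to a unit.

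Two degenerate loci must be handled separately. First, the specialisations $u=\pm2$, where the Chebyshev recursion degenerates (parabolic or central meridians) and the identity above no longer obstructs vanishing; there I would substitute directly into $\Phi_{n,i}$ and verify by hand that no singular point occurs. Second, the reducible/abelian locus and the pairwise intersections of the $\Phi_{n,i}$, which I would rule out by a resultant computation in the single distinguished variable. I expect the main obstacle to be making the elimination \emph{uniform in} $n$: the degree $\deg_z\Phi_{n,i}$ grows with $n$ and its coefficients are themselves Chebyshev polynomials of unbounded degree, so one cannot simply run a single Gröbner-basis or resultant computation. The crucial step — and the one I anticipate will require the most care — is isolating an $n$-independent identity among the $S_k$ that reduces the whole singular system to the Cassini identity, after which the finitely many degenerate cases are dispatched by direct inspection.
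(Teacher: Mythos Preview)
Your overall framework matches the paper's: apply the Jacobian criterion to the explicit defining polynomial, exploit the Chebyshev structure through $v=x^2+y^2+z^2-xyz-2$, and treat the non-geometric components (where $S_{n-1}(v)=0$, i.e.\ $v$ is a constant) separately from the geometric one. For the non-geometric components the paper's argument is indeed the short direct check you anticipate.

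Where your plan diverges is the mechanism for the geometric component $f_n = xy\,S_{n-1}(v) - (xy-z)\,S_{n-2}(v) - z\,S_n(v)$. You propose to combine the four equations $f_n=\partial_xf_n=\partial_yf_n=\partial_zf_n=0$ so as to collapse everything to the Cassini identity $S_k^2-S_{k+1}S_{k-1}=1$. The paper does \emph{not} do this, and it is not clear the reduction can be carried out: the partials involve both $S_k(v)$ and $S_k'(v)$, the latter multiplied by the factors $2x-yz$, $2y-xz$, $2z-xy$, and there is no evident syzygy that lands on a pure Cassini expression uniformly in $n$. This is exactly the step you flag as ``the one I anticipate will require the most care,'' and as written it is a genuine gap rather than a deferred computation.

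What the paper actually does for the geometric component is a case split on the $(x,y)$ symmetry. When $x^2\neq y^2$, the combination $x\,\partial_xf_n - y\,\partial_yf_n$ kills the $S'$-terms and forces $S_{n-1}(v)=S_{n-2}(v)$; feeding this into $\partial_zf_n=0$ then gives $S_n(v)=S_{n-2}(v)$, and the three being simultaneously equal is ruled out directly. The substantive work lies on the diagonals $x=\pm y\neq 0$: there the paper eliminates the $S'$-block differently, reduces to a quadratic constraint $z^2\mp x^2z+x^2=0$ together with $v=x^2-2$, and finishes via the parametrisation $x=a+a^{-1}$, computing $z$ explicitly as a rational function of $a$ and checking that the quadratic fails. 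A separate separability lemma for $S_n-S_{n-2}$ handles the residual case $(x,y)=(0,0)$. So the $n$-uniformity comes not from a single Chebyshev identity but from this parametrised calculation.

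One minor point: you worry about intersections of distinct components producing singularities. The proposition asserts smoothness of \emph{each component}, not of their union, so this check is not needed and the paper does not address it.
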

Afterward, we will establish the following. 
\begin{theorem}
\label{theo:TW}
Let $D(\tau)$ denote the divisor of the torsion function $\tau$ on the variety $X^{\rm irr}(W_{2n-1})$ of irreducible $\SL_2\C$-characters of the odd-twisted Whitehead link $W_{2n-1}$ with any $n\in \Z$. 
Then, $D(\tau)$ has multiplicity two on the geometric component, whereas $D(\tau)$ has multiplicity one or $D(\tau)=0$ holds on any other component. 
\end{theorem}

For each $l\in \Z$, the $-1/l$-filling of $S^3-W_{2n-1}$ along a component of $W_{2n-1}$ yields the exterior 
$S^3-J(2n,2l)$ of so-called the double twist knot $J(2n,2l)$. 
Similarly to the case of $W$ and $J(2,2l)$, we may deduce the following. 
\begin{corollary}
\label{coro:doubletwist}
The torsion vanishes with multiplicity at least two on the variety of irreducible ${\rm SL}_2\C$-characters of any double twist knot $J(2n,2l)$.
\end{corollary}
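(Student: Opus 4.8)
The plan is to realise the character variety of the double twist knot as an algebraic curve sitting inside the character variety of the twisted Whitehead link, and then to transport the multiplicity-two statement of \cref{theo:TW} along this inclusion by means of \cref{lem:intermult}. First I would record the surgery description: $J(2m,2n)$ is obtained from $W_{2n-1}$ by $-1/m$ Dehn filling on one of its two boundary components. The filling kills the homotopy class of the surgery curve, so it induces a surjection $\pi_1(W_{2n-1}) \twoheadrightarrow \pi_1(J(2m,2n))$, and dually a closed embedding of character varieties $\iota \colon X(J(2m,2n)) \hookrightarrow X(W_{2n-1})$. Denote by $C$ its image, the algebraic curve cut out by the condition that the surgery curve be sent to the identity. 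By the fact quoted just before the statement, every representation of $\pi_1(J(2m,2n))$, viewed through $\iota$, lies in the geometric component $X_0$ of $X(W_{2n-1})$, so $C \subseteq X_0$; by \cref{prop:smooth} the surface $X_0$ is smooth, which lets me work with local equations of divisors and intersection multiplicities of curves on $X_0$ without difficulty.

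Next I would invoke the behaviour of the Reidemeister torsion under Dehn filling (see \cref{subsec:surg}) to identify the torsion function $\tau_J$ on $X(J(2m,2n))$ with the restriction of the link torsion $\tau$ to $C$, up to a factor coming from the filling solid torus which is a unit at the zeros of $\tau_J$. In particular the zeros of $\tau_J$ are precisely the points of $C$ lying on the vanishing divisor of $\tau$. Then, since \cref{theo:TW} asserts that on $X_0$ the divisor of $\tau$ equals $2Z$, where $Z$ is the reduced vanishing locus, and assuming $C \not\subseteq Z$ (otherwise $\tau_J$ vanishes identically and there is nothing to prove), \cref{lem:intermult} gives for each $p \in C \cap Z$
\[
\ord_p\big(\tau|_{C}\big) \;=\; 2\cdot i_p(C,Z) \;\ge\; 2,
\]
where $i_p(C,Z)\ge 1$ is the local intersection number on the smooth surface $X_0$. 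Combined with the identification $\tau_J = (\text{unit})\cdot \tau|_C$ near each zero, this shows that $\tau_J$ vanishes to order at least two at every point where it vanishes, which is the assertion of the corollary.

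The hard part, and the step I expect to require the most care, is the torsion surgery formula: one has to justify that the knot torsion $\tau_J$ is genuinely a nonvanishing multiple of the restriction $\tau|_C$ of the link torsion, controlling the contribution of the filling torus and checking that the correcting factor is a unit along $C$ at the relevant points, since otherwise the multiplicity bound could be spoiled. A secondary subtlety is to rule out, or else interpret correctly, the degenerate case $C \subseteq Z$, in which $\tau_J$ vanishes identically on the knot's character variety; here the statement should be read as covering only the locus where the torsion is not identically zero. Finally, a clean way to package the inequality is as a transversality statement in $X_0$ between the curve $C$ and the double curve $Z$, exactly paralleling the remark made after \cref{cor:twist} for twist knots.
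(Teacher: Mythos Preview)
Your proposal is correct and follows the same strategy as the paper: embed $X(J(2m,2n))$ as a curve in the geometric component of $X(W_{2n-1})$, where \cref{theo:TW} gives the torsion divisor multiplicity two, and then restrict. The only difference is one of emphasis: the paper's argument in \cref{subsec:double} devotes most of its space to \emph{proving} the fact you take as given from the introduction---that representations factoring through the $-1/m$ filling cannot lie on the non-geometric components $S_{n-1}(v)=0$, via an explicit computation of $\rho(\lambda_a)$ from \cite{Tran1}---and then concludes tersely, whereas you spell out more carefully the transport of multiplicity via \cref{lem:intermult} and the Mayer--Vietoris torsion formula (the trace check you flag as delicate, $\Tr\rho(\lambda_a)\neq 2$, is exactly what the paper verifies in its final line).
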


\begin{remark} 
(1) We have similar results for even-twisted Whitehead links $W_{2n}$ and double twist knots of type $J(2n-1,2l)$. 
The divisor $D(\tau)$ of the torsion function has multiplicity two on the geometric component of $X^{\rm irr}(W_{2n})$, whereas $D(\tau)$ has multiplicity one on the other components. 
Thus, $X^{\rm irr}(W_{2n})$ has no component where the torsion function vanishes identically. 

(2) For every $k \in \Z$, the Whitehead link $W_k$ is the result of a filling along a component of the Borromean ring $B=6^3_2=L6a4$. 
As we derived properties of $J(2,2l)$ from $W$ simultaneously, we may study the families $(W_{2n-1})_n$ and $(W_{2n})_n$ from the viewpoint of 3-component links. 

Detailed arguments will be given elsewhere. 
\end{remark}

\subsection{A remark on ${\rm SL}_3$-deformation} 
The multiplicity of the divisor of the torsion is also related to the singularities of the $\SL_3$-character variety, as we explain now. 

The $\SL_2$-character variety of a knot exterior always contains a line of \emph{reducible} representations. This line may intersect the other components, in which irreducible representations are dense. 
It has been known since Burde and de Rham \cite{Burde1967MathAnn, deRham1967EnseignMath} that the intersection points between the components of reducible representations and the other components correspond to roots of the Alexander polynomial of the knot. 

This has been generalized by Heusener and Porti \cite{HP15}. They proved the following: if an $\SL_n\C$-representation lies at an intersection point between a component of reducible representations and a component containing irreducible representations, then a certain \emph{twisted} Alexander polynomial has some specific zeros.
Moreover, if the zero is simple, 
then the intersection is as mild as possible, namely, they intersect transversely.   

In our setting, the torsion of a representation $\rho \colon \pi_1(M) \to \SL_2\C$ is the value at $t=1$ of the $\rho$-twisted Alexander polynomial $\Delta_\rho(t)$, and the multiplicity two phenomenon that we observe means that this yields a double zero of this twisted Alexander polynomial.
Heusener--Porti's result, combined with our study, suggests that the curve of non-acyclic representations of the twisted Whitehead links corresponds to the classes of $\SL_3\C$-representations of the form
 $\bma \rho & \ast \\ 0 & 1 \ema$ that are highly singular intersection points in their respective $\SL_3$-character varieties.  
 It seems to fit well with the few explicit commutations of $\SL_3(\C)$-character varieties available (figure-eight knot and Whitehead link, see \cite{HMP, GuillouxWill}).


\subsection{$L$-functions} \label{ss.Lfn}
In the final section of this article, we pursue the study of the $L$-functions of universal deformations, raised in a viewpoint of number theory \cite{Mazur2000, MTTU2017, KMTT2018, TTU} (see also \cite{MorishitaTerashima2007, Morishita2012, KMTT2023}), to interpret the ``multiplicity two'' phenomenon into the property of the $L$-functions of the odd-twisted Whitehead links $W_{2n-1}$. 

Let $\overline \rho \colon \pi \to \SL_2\F$ be a representation over a field $\F$ with characteristic $p>2$ and let $\mathcal O$ be a complete discrete valuation ring (CDVR) with the residual field $\mathcal O/ \mathfrak m_{\mathcal O} = \F$. A typical example is $(\F,\mathcal O)=(\F_p,\Z_p)$. 
\emph{The universal deformation} ${\bs \rho}$ of $\ol{\rho}$ over $\mca O$ is a lift of $\ol{\rho}$ 
to a representation over a complete local $\mca O$-algebra $\mca R_{\ol{\rho}}$ satisfying the following universal property: 
any lift of $\ol{\rho}$ to any complete local $\mca O$-algebra uniquely factors through ${\bs \rho}$ up to strict equivalence (conjugation fixing the residual representation $\ol \rho$).

Such a representation $\bs \rho$ is unique up to strict equivalence. 
Considering the homology of the complex of $\mathcal R_{\ol{\rho}}$-modules given by $\bs \rho$,  the analogy between knots and prime numbers designed as \emph{arithmetic topology} suggests that the order $L_{\bs \rho}$ of the first homology group of this complex should be of deep interest. 
This $L_{\bs \rho}$ is called \emph{the $L$-function of} ${\bs \rho}$, which is actually determined by $(\ol \rho, \mca O)$, 
and has been studied by many authors (see the references above).  

Let $f_n\in \Z[x,y,z]$ denote the reduced polynomial defining the geometric component of the ${\rm SL}_2\C$-character variety and put $\overline{f_n}:=f_n \ {\rm mod}\ p$. 
Note that the ${\rm SL}_2\C$-Reidemeister torsion function is given by $\tau_n\in\Z[x,y,z]$ by \cite[Corollary 2]{NguyenTran}, see \Cref{subsec:vanish}. 
Since the multiplicities of the zeros of the $L$-function $L_{\bs \rho}$ are intimately related to the vanishing multiplicity of the Reidemeister torsion, we may deduce the following.

\begin{theorem} \label{theo:L} 
Let $\ol{\rho}:\pi\to \SL_2\F$ be an absolutely irreducible non-acyclic representation of the group of the twisted Whitehead link $W_{2n-1}$ 
corresponding to a smooth point $(\ol{A},\ol{B},\ol{C}) \in \F^3$ of the geometric component $\ol{f_n}=0$ of the character variety. 

Assume $\partial_z \ol{f_n}(\ol{A},\ol{B},\ol{C}) \neq 0$.  
Let $(A,B)\in \mca O^2$ be any lift of $(\ol{A},\ol{B})$ and let $z_{f_n}(x,y)\in \mca O[\![x-A, y-B]\!]$ denote the implicit function given by Hensel's lemma. 

Then, the $L$-function $L_{\bs \rho}$ of the universal deformation satisfies 
\[L_{\bs \rho} \doteq \tau_{n}(x,y,z_{f_n}(x,y))\] 
in $\mca{R}=\mca O[\![x-A, y-B]\!]$.  

Furthermore, the zeros of $L_{\bs \rho}$ have multiplicity two in the sense that  
$L_{\bs \rho} \in (x-A,y-B)^2$ and $L_{\bs \rho} \not\in (x-A,y-B)^3$ hold in $\ol{\Frac\mca O}[\![x-A,y-B]\!]$. 
\end{theorem} 

\subsection*{Organization of the paper}
In \cref{sec:pre}, we introduce character varieties, Reidemeister torsion, and the notion of the multiplicity of a divisor. We prove some preparatory lemmas and \cref{thm.SF} as well.  
In \cref{sec:Whitehead}, we treat the case of the Whitehead link, proving \cref{theo:main} and corollaries. 
In \cref{sec:tW}, we study the family of odd-twisted Whitehead links to prove \cref{prop:smooth} and \cref{theo:TW}. 
In \cref{sec:L}, we discuss universal deformations and prove \cref{theo:L} on the $L$-functions.

\subsection*{Acknowledgments}
The authors would like to express their sincere gratitude to Marco Maculan for very enlightening conversations on the notion of the multiplicity of a divisor. 
We also thank Rapha\"el Alexandre, Elisha Falbel, Antonin Guilloux, Michael Heusener, Tomoki Mihara, and Gauthier Ponsinet for useful discussions and references. 

This paper has been written while L.\,B. was partially funded by the Research Training Group 2491 ``Fourier Analysis and Spectral Theory'', University of G\"ottingen.
A.\,T. has been supported by a grant from the Simons Foundation (\#708778). 
J.\,U. has been partially supported by JSPS KAKENHI Grant Numbers JP19K14538 and JP23K12969. 

\section{Preliminaries} \label{sec:pre}
This section contains preliminary materials on character varieties (\cref{subsec:prechar}), twisted homology (\cref{subsec:homol}), Reidemeister torsion (\cref{subsec:pretors}), torsion functions (\cref{ss.torsion}), 
acyclicity and fillings (\cref{subsec:surg}), surgeries (\cref{ss.surgeries}), 
Seifert fibered manifolds \cref{ss.SF}, 
and the multiplicity of a divisor (\cref{subsec:mult}).  
In \cref{subsec:surg}, we prove some technical lemmas. In \cref{ss.SF}, we establish \cref{thm.SF}. 

\subsection{Character variety}
\label{subsec:prechar} 
Let $R$ be any integral domain with characteristic zero and let ${\rm Frac}R$ denote the field of fractions. 
(In Sections 3 and 4, our concern will be the cases with $R=\C$.) 
Let $\Gamma$ be a finitely generated group and consider the set ${\rm Hom}(\Gamma,{\rm SL}_2R)$ of group homomorphisms. An element of ${\rm Hom}(\Gamma,{\rm SL}_2R)$ is called a \emph{representation}. 
\emph{The ${\rm SL}_2R$-character variety} of $\Gamma$ over $R$ is defined by 
\[ X_R(\Gamma) := \{\Tr \rho\mid \rho\in {\rm Hom}(\Gamma, \SL_2R)\},\]
where ${\rm Tr}$ denotes the trace. 
This set may be seen as an algebraic set, unique up to canonical isomorphisms, and is given as the zero locus of a family of polynomials. 
This is equivalent to considering the so-called GIT quotient 
${\rm Hom}(\Gamma,{\rm SL}_2R) /\!/{\rm SL}_2 R:={\rm Spec} (R[{\rm Hom}(\Gamma, \SL_2 R)]^{\SL_2 R})$, 
since the trace functions are the only invariant functions there. 

Let $\rho:\Gamma\to {\rm SL}_2 R$ be a representation and regard $R^2$ as a left $R[\Gamma]$-module via $\rho$. 
Then $\rho$ is said to be \emph{irreducible} if $R^2$ has no $R[\Gamma]$-submodule other than 0 and $R^2$, 
and it is said to be \emph{reducible} otherwise.  
In addition, $\rho$ is said to be \emph{absolutely irreducible} if it is irreducible over an integral closure $\ol{R}$, 
and it is said to be \emph{absolutely reducible} otherwise. 
Since $R^2$ is irreducible as a $R[\Gamma]$-module if and only if $({\rm Frac}R)^2$ is irreducible as a $({\rm Frac}R)[\Gamma]$-module, \cite[Lemma 1.2.1]{CS83} and \cite[Proposition 1.5.2]{CS83} yield the following lemma, assuring that 
these notions are well-defined for elements of $X_R(\Gamma)$. 

\begin{lemma} \label{lem:red} 
For a representation $\rho\colon \Gamma \to {\rm SL}_2 R$ with non-abelian image, 
the following are equivalent. 
\begin{itemize}
\item[(i)] $\rho$ is reducible. 

\item[(ii)] $\rho$ is reducible over an integral closure $\ol{R}$. 

\item[(iii)] $\Tr \rho (\gamma)=2$ holds at every element $\gamma$ of the commutator subgroup of $\Gamma$. 
\end{itemize} 
\end{lemma}

\begin{lemma} 
\label{prop:repequiv} 
If $\rho$ and $\rho'$ are in ${\rm Hom}(\Gamma,{\rm SL}_2 R)$ and $\rho$ is absolutely irreducible, then 
$\rho$ and $\rho'$ are equivalent over an algebraic closure $\ol{{\rm Frac} R}$   
if and only if ${\rm Tr}\rho={\rm Tr}\rho'$ holds. 
\end{lemma} 
The set $X_R^{\rm{a.i.}}(\Gamma)$ of absolutely irreducible characters is a Zariski open subset. 
We may write $X_{R}(\Gamma) = X_R^{\rm{a.r.}}(\Gamma) \sqcup X_R^{\rm{a.i.}}(\Gamma)$. 

There is a bijective correspondence between the equivalent classes of absolutely irreducible representations and points of $X_R^{\rm{a.i.}}(\Gamma)$, as well as that between the classes of indecomposable representations and points of $X_{R}(\Gamma)$. 

For a manifold $M$, we define $X_R(M)=X_R(\pi_1(M))$ and write $X_\C(M)=X(M)=X^{\rm irr}(M)\sqcup X^{\rm red}(M)$.

\subsection{Twisted complex and homology} \label{subsec:homol} 

Let $\rho\colon \Gamma \to {\rm SL}_2R$ be a representation. 
Then \emph{the twisted complex} of $(\Gamma,\rho)$ is defined by 
\[C_\ast(\Gamma,\rho):=R^2\otimes_{\Z[\pi_1(M)]} C_\ast(\Gamma),\]
where $C_\ast(\Gamma)$ is a free resolution of $\Z$ of left $\Z[\Gamma]$-modules, and 
we regard $R^2$ as a right $\Z[\pi_1(M)]$-module via $\rho\circ(g\mapsto g^{-1})$. 
The twisted homology $H_\ast(\Gamma,\rho):=H_\ast(C_\ast(\Gamma,\rho))$ is independent of the choice of $C_\ast(\Gamma)$. 

Let $M$ be a compact 3-manifold and $\rho\colon \pi_1(M) \to {\rm SL}_2R$ a representation. 
Let $C_\ast(M)$ be a CW complex of $M$ and let $C_\ast(\wt{M})$ denote its lift to the universal cover $\wt{M}\to M$. 
Then \emph{the twisted complex} of $(M,\rho)$ is defined by 
\[C_\ast(M,\rho):=R^2\otimes_{\Z[\pi_1(M)]} C_\ast(\wt{M}),\] 
where we regard $C_*(\widetilde M)$ as a left $\Z[\pi_1(M)]$-module via the deck transformation. 
The twisted homology $H_\ast(M,\rho):=H_\ast(C_\ast(M,\rho))$ is independent of the choice of $C_\ast(M)$. 

When $M$ is fixed, 
$\rho$ is said to be \emph{acyclic} if $H_i(M,\rho)=0$ holds for every $i$, and 
$\emph{non-acyclic}$ otherwise. 
In addition, $\rho$ is said to be \emph{rationally} (\emph{non}-)\emph{acyclic} if $\rho\otimes_R {\rm FracR}$ is (non-)acyclic. 
These notions also turn out to be well-defined over $X_R(M)$. 

Since the Euler characteristics satisfy $\sum_i (-1)^i{\rm dim} H_i(M,\rho\otimes {\rm Frac}R) =\chi(C_*(M,\rho))={\rm deg}\,\rho \cdot \chi(M)$, we have a rationally acyclic representation $\rho$ only if $\chi(M)=0$ holds, e.g., $M$ is the exterior of a link in a rational homology 3-sphere. 

If $M$ is an Eilenberg--MacLane space with $\Gamma=\pi_1(M)$, then 
these complexes are homotopic, and Poincar\'{e}--Hopf's theorem assures that 
$H_\ast(M,\rho)\cong H_\ast(\Gamma,\rho)$. 

\subsection{Reidemeister torsion}
\label{subsec:pretors} 
Let $C_\ast=(C_i,\partial_i)_i$ be a finite complex over an integral domain $R$
that is rationally acyclic, and let $\mf{c}_i$ be an $R$-basis of $C_i$ for each $i$.
Consider the exact sequence 
\[0 \to Z_i \to C_i \xrightarrow{\partial_i} B_{i-1} \to 0.\]
Take an $R$-basis $\mf{b}_i$ of $B_i = Z_i$  and a lift $\wt{\mf{b}}_i$ to $C_{i+1}$. 
Then $\mf{b}_i\sqcup \wt{\mf{b}}_{i-1}$ is also a basis of $C_i$. 
Let $[\mf{b}_i \sqcup \wt{\mf{b}}_{i-1} \colon \mf{c}_i]$ denote the determinant of the transition matrix. 
Then \emph{the Reidemeister torsion} (or \emph{the torsion} for short) of $(C_\ast,\mf{c}_\ast)$ is defined by the alternating product 
\[\tor(C_*, \mf{c}_*)  := \prod_i [\mf{b}_i \sqcup \wt{\mf{b}}_{i-1} \colon \mf{c}_i]^{(-1)^i} \in ({\rm Frac}R)^\ast.\]
This value does not depend on the choices of $(\mf{b}_i)_i$ and $(\wt{\mf{b}}_i)_i$, 
and is known to be a homotopy invariant. 

Let $M$ be a connected compact 3-manifold and $\rho \colon \pi_1(M) \to {\rm SL}_2 R$ a rationally acyclic representation. 
Let $C_\ast(M)$ be a CW complex of $M$ with a basis $\mf{c}_\ast=(\mf{c}_i)_i$. 
Let $\mf{c}_i=(c_i^{\,1},\ldots,c_i^{\,n_i})$ and choose a lift $\wt{c}_i^{\,j}\in C_i(\wt{M})$ of each $c_i^{\,j}$. 
Let $(v_1,v_2)$ be a basis of $R^2$. 
Then 
\[\wt{\mf{c}}_i\otimes \rho:=(v_1\otimes \wt{c}_i^{\,1}, v_2\otimes \wt{c}_i^{\,1}, \ldots, v_1\otimes \wt{c}_i^{\,n_i}, v_2\otimes \wt{c}_i^{\,n_i})\] 
becomes a basis of $C_i(M,\rho)$. 
We define the torsion of $(M,\rho)$ by 
\[\tor_M(\rho):=\tau(C_*(M, \rho),\mf{c}_i\otimes \rho).\]  
A deep theorem (cf.\,\cite{Chapman, Cohen}) ensures that this value is independent of the CW structure, and the choice of basis only affects the sign. 

In general, the torsion may contain information that $\pi_1(M)$ does not. 
For instance, lens spaces are completely determined by their fundamental groups together with their torsions, but not only by their fundamental groups. 

If $M$ is a connected compact 3-manifold with non-trivial boundary, then it collapses to a 2-dimensional complex, so its torsion may be calculated as the torsion of the complex of group homology of $\pi_1(M)$ given by the Fox free derivative with a twisted coefficient. 


\subsection{Torsion function} 
\label{ss.torsion}

The following might be well-known. 

\begin{proposition} 
\label{prop.nonacyclic} 
Let $M$ be a connected compact 3-manifold  
and $X$ a component of the character variety $X_R(M)$ with ${\rm dim}X\geq 1$. 

{\rm (1)} If $X$ contains a rationally acyclic representation, then the torsions of acyclic representations define a rational function $\tau$ on $X$. 

{\rm (2)} 
If in addition $\rho:\pi_1(M)\to {\rm SL}_2R$ is a representation on $X$ such that $\rho$ is absolutely irreducible,   
then $\rho$ is rationally non-acyclic if and only if $H_1(M,\rho\otimes_R {\rm Frac}R)\neq 0$. 

{\rm (3)} 
Further suppose that $\partial M\neq \emptyset$. 
Then $\rho$ is rationally non-acyclic if and only if 
its character $\Tr \rho$ corresponds to a zero of this function $\tau$. 
\end{proposition} 

\begin{proof} 
{\rm (1)} 
Once a pair $(C_\ast(M),\mf{c}_\ast(M))$ is fixed, the value $\tor_M(\rho)$ depends only on the conjugacy class of $\rho$. 
One should note that reducible representations $\rho,\rho'$ that belong to distinct conjugacy classes may satisfy $\tau(M,\rho)=\tau(M,\rho')$, 
since only the diagonal entries affect the calculation. 

Now let $X$ be a component of $X_R(M)$. 
Let $R[X]$ denote the ring of functions over $X$, which is isomorphic to the quotient of the universal character ring 
$\mca{B}[X]=R[\tau_g;\ g\in \pi_1(M)]/(\tau_e-2,\tau_g\tau_h-\tau_{gh}-\tau_{gh^{-1}};\ g,h\in \pi_1(X))$ 
by the prime ideal corresponding to the component $X$. 
Then we have so-called \emph{a tautological ${\rm SL}_2$-representation} $\rho^{\rm taut}$ over a certain quadratic extension of $R[X]$,  
and $\rho^{\rm taut}$ yields a representation $\rho$ at each point of $X$ by evaluation (cf.\,\cite[Subsection 1.2]{Benard2020OJM}, \cite[Subsection 3.3]{Marche-RIMS2016}). 

By the assumption that $X$ has a rationally acyclic representation, we see that $\rho^{\rm taut}$ is rationally acyclic. 
Since the torsion $\tau=\tau(M,\rho^{\rm taut})$ is determined only by the trace, 
$\tau$ belongs to the invariant field ${\rm Frac}(R[X])$. 
If $\rho$ is acyclic, then $\tau$ yields $\tau(M,\rho)$ by evaluation. 
Since $\tau$ is a non-zero rational function, the values at rationally acyclic representations determine $\tau$. 

(2) Suppose that $\rho$ is absolutely irreducible. 
Then, by \cref{lem:red}, there exists some $\gamma_0\in\pi_1(M)$ with $\Tr \rho(\gamma_0)\neq 2$. 
This implies that $H_0(M,\rho\otimes_R {\rm Frac}R)=0$. 
Indeed, $H^0(M, \rho)$ is isomorphic to the space $\{v \in ({\rm Frac}R)^2 \mid \forall \gamma\in \pi_1(M), \, \rho(\gamma)^{-1} v = v \}$ of invariant vectors,
where the latter is trivial since $\rho(\gamma_0)$ has no fixed non-zero vector, and 
the universal coefficients theorem yields $H_0(M,\rho\otimes_R {\rm Frac}R)=0$.
By the Poincar\'{e}--Lefschetz duality and the universal coefficient theorem, $\rho$ is non-acyclic if and only if $H_1(M,\rho\otimes_R {\rm Frac}R)\neq 0$.

(3) 
By the existence of a rationally acyclic representation, we have $\chi(M)=0$. 
By $\partial M\neq \emptyset$, a complex of $M$ collaspes to a finite 2-dimensional complex 
corresponding to a presentation $\pi_1(M)=\langle a_1,\ldots, a_s\mid r_1,\ldots, r_{s-1}\rangle$ with deficiency 1. 
Thus, we obtain a twisted complex of $\rho^{\rm taut}$ in the form 
\[0\To C_2\overset{d_2}{\To} C_1 \overset{d_1}{\To} C_0\To 0,\]
where, by the map $\Phi:R[\pi_1(M)]\to M_2(R);\, \sum c_g g\mapsto \sum c_g \rho^{\rm taut}(g)$, 
these maps $d_2$ and $d_1$ are presented by matrices $A=(\Phi(\partial r_i/\partial a_j))$ and $(\Phi(a_j-1))$ respectively. 
Let $A_j$ denote the result of deleting the $j$-th column in $A$. 
As is well-known, we may choose $k$ such that $\tau \doteq {\rm det} A_k/\Phi(a_k-1)$ holds.

By $\Tr \rho(\gamma_0)\neq 2$, we have that $\Phi(a_k-1)\neq 0$. Thus, we have 
$\tau(\rho)=0$ if and only if $d_2$ at $\rho$ has a small rank and $H_1(\rho\otimes {\rm Frac}R)\neq 0$. This completes the proof.
\end{proof} 

An alternative proof of \Cref{prop.nonacyclic} may be given by considering the value of the torsion $\Delta_M(t_1,\ldots,t_d)$ of the tensor representation $\rho\otimes \alpha:\pi_1(M)\to \SL_2(R[t_1^{\pm1},\ldots,t_d^{\pm 1}])$ at $t_1=\cdots=t_d=1$, 
where $\alpha$ denotes the representation induced by the abelianization map (cf.\,\cite[Subsection 2.3]{NguyenTran}). 

\cite[Remark 3.10]{Marche-RIMS2016} gives a decomposable tautological representation on the variety of reducible characters. We may also construct an indecomposable tautological representation there (cf.\,\cite{SaitoK1996}, \cite[Appendix]{Benard2018PhD}).

Since the Poincar\'{e}--Hopf duality asserts $H_i(M,\rho)\cong H_i(\pi_1(M),\rho)$, 
we see that the multiplicities of zeros of the torsion function are determined only by $\pi_1(M)$. 
Nevertheless, arithmetic topology suggests that this divisor should have some subtle nature of deep interest, and we will actually find a geometric interpretation in relation to certain surgeries, extending our previous observations. 

\subsection{Acyclicity and fillings}
\label{subsec:surg} 
We prove two technical lemmas for the cases with $R=\C$. 
\begin{lemma}
\label{lem:MV} 
Let $M$ be a compact $3$-manifold, $K$ a knot in $M$, and $\gamma\in \pi_1(M)$ a homotopy class of $K$. Let $\rho \colon \pi_1(M) \to {\rm SL}_2\C$ be a representation such that $\Tr \rho(\gamma) \neq 2$. 
Let $X_K$ denote the exterior of $K$ in $M$ and let $\varrho\colon \pi_1(X_K) \to {\rm SL}_2\C$ denote the composition of $\rho$ and $\pi_1(X_K)\surj \pi_1(M)$. Then $\varrho$ is acyclic if and only if $\rho$ is acyclic.
\end{lemma}

\begin{proof} 
Let $V_K$ be a tubular neighborhood of $K$, which is homeomorphic to a solid torus $D^2 \times S^1$, 
and let $T_K$ denote the boundary $\partial V_K$. 
Note that $\pi_1(V_K)$ is freely generated by $\gamma$, and let $c_\gamma \in \pi_1(T_K)$ any element such that $\{\gamma, c_\gamma\}$ generates $\pi_1(T_K)$.
Then we may find a complex $C_*(V_K, \rho)$ that is identified with 
\[ 0\To \C^2 \xrightarrow{I_2-\rho(\gamma)} \C^2 \To 0,\]
and we have $H_*(V_K, \rho) = 0$ by $\Tr \rho(\gamma)\neq 2$. 

Next, $C_\ast(T_K,\rho)$ may be identified with 
\[0\To \C^2 \xrightarrow{d_2} \C^4 \xrightarrow{d_1} \C^2\To 0\] 
where the maps are 
\[d_2 = \spmx{I_2-\rho(\gamma) \\ I_2-\rho(c_\gamma)},\ \ 
d_1 = \spmx{\rho(c_\gamma) - I_2 & \rho(\gamma)-I_2}.\] 
Again by $\Tr \rho(\gamma)\neq 2$, we see that $d_2$ is injective and $d_1$ is surjective, 
so we have $H_2(T_K,\rho)=H_0(T_K,\rho)=0$. 
Noting that the Euler characters $\chi(\bullet)$ satisfies $\chi(C_\ast(T_K),\rho)={\rm deg}\,\rho \cdot \chi(T_K)=0$, we obtain $H_1(T_K,\rho)=0$ as well. 

Now we use the Mayer--Vietoris sequence for the decomposition
\[M = X_K \cup_{T_K} V_K\]
and it comes that $H_*(X_K, \varrho) \cong H_*(M, \rho)$. The lemma follows.
\end{proof}

\begin{lemma}
\label{lem:open}
Let $M$ be a compact 3-manifold. Then being acyclic is an open property on the closure $\ol{X^{\rm irr}}(M)$ of the set of irreducible characters in $X_\C(M)$. 
\end{lemma}

\begin{proof} 
Assume that $M$ has an acyclic representation, so $\chi(M)=0$.  
We shall prove that for any irreducible acyclic representation $\varrho$, there exists an open neighborhood consisting of acyclic representations. 
Note that irreducible representations form an open dense subset of $\ol{X^{\rm irr}}(M)$, so there exists an open neighborhood of $\varrho$ such that every $\rho$ is irreducible there. 
By \cref{prop.nonacyclic} (2), in this neighborhood, $\rho$ is acyclic if and only if $H_1(M, \rho) = 0$ holds. 
Let us prove that $H_1(M, \rho) = 0$ is an open condition on $X_\C(M)$. 
Since $H_0(M, \rho) = 0$, the dimension of the kernel of the map $d_1$ is constant, and $\dim H_1(M, \rho)$ is determined by the rank of the matrix $d_2$. 
This matrix varies polynomially with the entries of the representation $\rho$. So, the rank of this matrix is a lower-semicontinuous function, and cannot decrease in a small neighborhood of $\varrho$. This completes the proof. 
\end{proof}

\subsection{Conventions for surgeries} 
\label{ss.surgeries} 
Let $L = K_1 \cup \ldots \cup K_s$ be a link (with an orientation) in an integral homology 3-sphere $M$ and let $V_L=V_{K_1}\cup \cdots \cup V_{K_s}$ be its tubular neighborhood. 
For each boundary torus $T_{K_i} := \partial V_{K_i}$, there is a natural pair $(m_i,l_i)$ of peripheral curves, that is, a pair of simple closed curves on $T_{K_i}$ generating $H_1(T_{K_i})$ such that, $m_i$ is anti-clockwise to $K_i$, 
the map $H_1(T_{K_i})\to H_1(M- K_i)\cong \Z$ sends $m_i\mapsto 1$ and  $l_i\mapsto 0$, 
and the map $H_1(T_{K_i})\to H_1(V_{K_i})\cong \Z; [K_i]\mapsto 1$ sends $m_i\mapsto 0$ and $l_i\mapsto 1$. 
Then the intersection number becomes $\iota(m_i,l_i)=1$. 

Let $(a,b)$ be a pair of coprime integers, so we have $a/b\in \Q\cup \{\infty\}$. 
Then the result of the $a/b$-surgery along $K_i$ is the $3$-manifold obtained by gluing a solid torus $S^1 \times D^2$ to the exterior $X_{K_i}=M-{\rm Int}(V_{K_i})$ of $K_i$ by an orientation reversing homeomorphism of $\partial (S^1 \times D^2) \cong T_{K_i}$ such that the curve $c = \{ \ast \} \times \partial D^2$ is identified with a curve presenting $a m_i + b l_i$ in $H_1(T_{K_i})$. 

\subsection{Seifert fibered manifolds} \label{ss.SF} 
Here, we prove \cref{thm.SF}. 

\begin{proposition} \label{prop.SF}
Let $N$ be a Seifert fibered 3-manifold and $\rho:\pi_1(N)\to {\rm SL}_2\C$ an irreducible representation. 
Then the twisted complex $C_*(N, \rho)$ is acyclic if and only if the image of the generic fiber is non-trivial. 
\end{proposition} 

\begin{proof} 
Kitano \cite[Main Theorem]{Kitano1994TJM} asserts the claim for compact Seifert fibered spaces without boundaries. 
The core of the proof is to describe $N$ as the union of tubular neighborhoods of singular fibers and a trivial bundle $N_0=\Sigma\times S^1$, $\Sigma$ being a surface, consider the Mayer--Vietoris long exact sequence, 
and deduced that $C_*(N, \rho)$ being acyclic is equivalent to $H_1(N_0,\rho)=0$ and to ${\rm det}(I-\rho(h))\neq 0$ for a generic fiber $h$.
Now it is clear that the same proof works verbatim in the case with a boundary.  
\end{proof}

\begin{proof}[Proof of \cref{thm.SF}] 
Let $\rho \colon \pi_1(M) \to \SL_2\C$ be as in the assertion, so there exists $\gamma \in \pi_1(\partial M)$ with $\rho(\gamma)$ non-trivial, and furthermore $\rho$ sends the generic fiber of $N$ to $\Id$.
Hence by \cref{prop.SF}, we see that $\rho:\pi_1(N)\to {\rm SL}_2\C$ in non-acyclic. 
Sine $\rho$ is not identity on $\gamma$, \cref{lem:MV} assures that $\rho:\pi_1(M)\to {\rm SL}_2\C$ is also non-acyclic. 
\end{proof} 

\subsection{Definition of multiplicity} \label{subsec:mult} 

The notion of the multiplicity of divisors has a long history and is closely related to the intersection multiplicity. 
Here we follow \cite[Chapter 3]{Shaf} to recollect these notions in our setting.

Let $X$ be a smooth algebraic set in the affine space $\C^n$ 
(which will be the case in this paper, see \cref{prop:smooth}). It is defined as the zero locus of a family of polynomials $p_1, \ldots, p_k \in \C[x_1, \ldots, x_n]$. The \emph{coordinate ring} of $X$ is $\C[X] = \C[x_1, \ldots x_n] /(p_1, \ldots p_k)$. It is the ring of polynomial functions defined on $X$. 
Let $P\in \C[X]$ and let $X\cap \{P=0\}=C_1 \cup \ldots \cup C_r$ denote the unique irreducible decomposition. 
Then \emph{the divisor} of $P$ on $X$ is the formal sum 
\[D_X(P)=\sum_{i=1}^r m_i C_i\] where $m_i\in \Z$ 
is \emph{the multiplicity} of $D_X(P)$ along $C_i$,  defined as follows. 
For each $i$, let $f_i\in \C[X]$ be an irreducible element such that $C_i=\{f_i=0\}$. 
The local ring $\mca O_i$ of $X$ along $C_i$ stands for the localization $\C[X]_{(f_i)}$ of $\C[X]$ at the prime ideal $(f_i)$. 
We define $m_i$ to be the integer satisfying $P \in (f_i)^{m_i}$ and $P \notin (f_i)^{m_i+1}$ in $\mca O_i$,
which exists by Nakayama's lemma. 
In other words, $m_i$ is the valuation of $P$ at $(f_i)$.

The notion of multiplicity $m_i$ is local, in the sense that it may be determined only by information in the local ring at a point on $C_i$.  
To see this, we invoke Serre's intersection formula (cf.\,\cite[Appendix A]{Hartshorne}): 
Let $X,Y$ be algebraic sets in $\C^n$ that intersect properly, that is, 
every irreducible component of $X\cap Y$ has codimension equal to $\operatorname{codim} X + \operatorname{codim} Y$, 
and let $W$ be an irreducible component of $X\cap Y$. 
Serre showed in \cite{Serre1965ALM} that we may define \emph{the intersection multiplicity} of $X$ and $Y$ along $W$ with the required properties of the intersection forms by putting 
\[\iota(X,Y;\, W) = \sum_{j\geq 0} (-1)^j \,  \textrm{length}_A( \Tor_j^A(A/\mathfrak a, A/ \mathfrak b)),\]
where $A$ denotes the local ring of $\C^n$ at an arbitrarily chosen point $w$ of $W$, and $\mathfrak a, \mathfrak b$ denote the ideals of $X,Y$ in $A$. 
If $\bullet_{(w)}$ denotes the localization at $w$ by slight abuse of notation, then we have $\mathfrak a={\ker}\left(\C[\C^n]_{(w)}\to \C[X]_{(w)}\right)$, and $\mathfrak b={\ker}\left(\C[\C^n]_{(w)}\to \C[Y]_{(w)}\right)$. 
By \cite[Appendix 1, Theorem 1.1]{Hartshorne}, the intersection multiplicity is additive and is equal to one if $X$ and $Y$ intersect transversally. 

Now, let us combine the settings of the first and second paragraphs. 
Suppose in addition that $P\in \C[X]$ is the restriction of an irreducible element $P\in \C[\C^n]$ with $Y=\{P=0\}\subset \C^n$. 
Then, by what we have discussed so far, 
the multiplicity $m_i$ of $P$ along $C_i$ on $X$ satisfies 
\[m_i=\iota(X,Y;C_i).\]
Moreover, 
one can see that $\Tor_j^A(A/\mathfrak a, A/ \mathfrak b)=0$ for $j\neq 0$ and  
$\Tor_0^A(A/\mathfrak a, A/ \mathfrak b)=A/\mathfrak a \otimes_A A/ \mathfrak b = A/(\mathfrak a + \mathfrak b)$, so we have 
\[
m_i = {\rm length}_A A/(\mathfrak a +\mathfrak b), 
\]
which has been more classically known. 

\begin{example} 
Consider the surface $X=\{z-x^2=0\}$ in $\C^3$. For $P=z$, with the above notations, we have $C_1=\{x=z=0\}$. Since
$\C[X]=\C[x,y,z]/(z-x^2)\cong \C[x,y]$, one finds $f_1=x$ and $\mca O_1=(\C[x,y,z]/(z-x^2))_{(x)}\cong \C[x,y]_{(x)}$. One obtains
$(P)=(z)=(x^2)=(x)^2$ in $\C[X]$, $P\in (x^2)$, $P\not\in (x^3)$ in $\mca O_1$, 
so $m_1=2$ and the divisor of $P$ is $D(P)=2 \,C_1$. 

In addition, let $Y=\{z=0\}$ and $w=(0,0,0)$. Then, we have 
$\C[Y]=\C[x,y,z]/(z)$, 
$A=\C[\C^3]_{(w)}=\C[x,y,z]_{(x,y,z)}$, and the ideals $\mf{a}$, $\mf{b}$ of $X$, $Y$ satisfy $\mf{a}+\mf{b}=(z,z-x^2)=(z,x^2)$. 
We obtain $A/(\mf{a}+\mf{b})\cong \C[x,y]_{(x,y)}/(x^2)$, 
the longest chain is $xA/(z,x^2) \subsetneq A/(z,x^2)$, 
and hence ${\rm length}_A A/(\mf{a}+\mf{b})=2$. 
\end{example} 

We prove that the multiplicity behaves well under restriction to codimension one subvarieties:
\begin{lemma}
\label{lem:intermult} 
Let $X,Y$ be subvarieties of $\C^n$ and $P\in \C[\C^n]$. Fix $f_i\in \C[X]$, $C_i=\{f_i=0\}\subset X$, and $D_X(P)=\sum_i m_i C_i$  as  above. 
Let $Z$ be a hypersurface in $X$ such that $Z$ and $C_i$'s intersect properly. 
Let $C_i\cap Z=\bigcup_j C_{ij}$ denote the unique irreducible decomposition for each $i$ and 
let $D_Z(P)=\sum_{ij} m_{ij} C_{ij}$ denote the divisor of $Z$ defined by the restriction $P\in \C[Z]$. 
Then each $(i,j)$ satisfies 
\[m_i\leq m_{ij}.\] 
\end{lemma}
\begin{proof} Let $g\in \C[X]$ be an irreducible element with $Z=\{g=0\}$ and $f_{ij}\in \C[Z]$ irreducible with $C_{ij}=\{f_{ij}=0\}$. 
Since $\C[Z]=\C[X]/(g)$, we have $f_i\equiv \prod_{j=1}^{s_i} f_{ij}^{e_{ij}}$ mod $(g)$ with $e_{ij}\in \Z_{> 0}$. 
Thus, if $P\in (f_i)^{m_i}$ in $O_i=\C[X]_{(f_i)}$, then we have $P\in (f_{ij})^{m_i e_{ij}}$ in $O_{ij}:=\C[Z]_{(f_{ij})}$. 
Since $m_{ij}$ is the integer satisfying $P \in (f_{ij}^{m_{ij}})$ and $P \notin (f_{ij}^{m_{ij}+1})$ in $O_{ij}$,
this implies that $m_i\leq m_{ij}$. 
\end{proof}




\begin{remark} 
The argument above naturally extends to any closed variety $X$ over $\C$ which is \emph{non-singular in codimension one}, that is, the set of singular points has codimension $\geq 2$, by 
choosing $w\in W$ which is not a singular point of $X$ (say, a generic point) and looking at open neighborhoods in the sense of Zariski topology. 
In this case, ${\rm Tor}_1^A$ may contribute. 
\end{remark} 

\section{The Whitehead link $W$}
\label{sec:Whitehead}
In this section, we thoroughly study the Whitehead link $W=W_1$ defined by the following diagram. 
We work over ${\rm SL}_2\C$. 
\begin{figure}[h]
\def\svgwidth{0.5\columnwidth}
\begingroup%
  \makeatletter%
  \providecommand\color[2][]{%
    \errmessage{(Inkscape) Color is used for the text in Inkscape, but the package 'color.sty' is not loaded}%
    \renewcommand\color[2][]{}%
  }%
  \providecommand\transparent[1]{%
    \errmessage{(Inkscape) Transparency is used (non-zero) for the text in Inkscape, but the package 'transparent.sty' is not loaded}%
    \renewcommand\transparent[1]{}%
  }%
  \providecommand\rotatebox[2]{#2}%
  \newcommand*\fsize{\dimexpr\f@size pt\relax}%
  \newcommand*\lineheight[1]{\fontsize{\fsize}{#1\fsize}\selectfont}%
  \ifx\svgwidth\undefined%
    \setlength{\unitlength}{352.32443718bp}%
    \ifx\svgscale\undefined%
      \relax%
    \else%
      \setlength{\unitlength}{\unitlength * \real{\svgscale}}%
    \fi%
  \else%
    \setlength{\unitlength}{\svgwidth}%
  \fi%
  \global\let\svgwidth\undefined%
  \global\let\svgscale\undefined%
  \makeatother%
  \begin{picture}(1,0.72519641)%
    \lineheight{1}%
    \setlength\tabcolsep{0pt}%
    \put(0,0){\includegraphics[width=\unitlength,page=1]{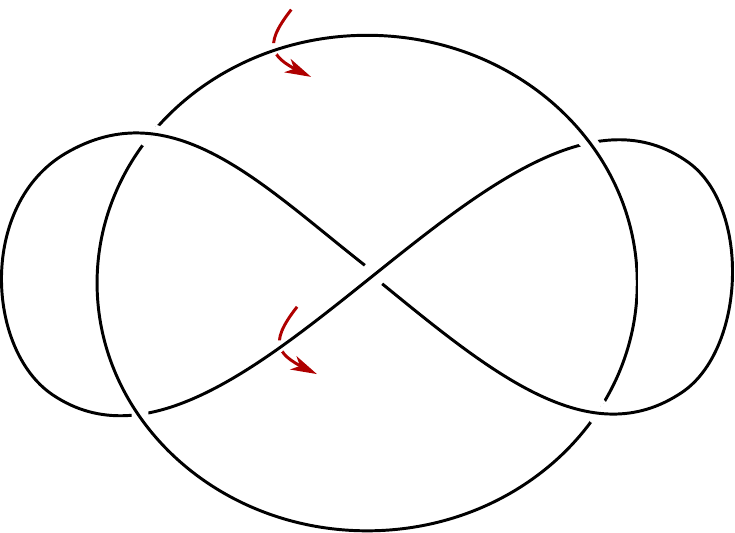}}%
    \put(0.39681741,0.71209146){\makebox(0,0)[lt]{\lineheight{1.25}\smash{\begin{tabular}[t]{l}$m$\end{tabular}}}}%
    \put(0.40475254,0.30719845){\makebox(0,0)[lt]{\lineheight{1.25}\smash{\begin{tabular}[t]{l}$\mu$\end{tabular}}}}%
  \end{picture}%
\endgroup%

\caption{\label{fig:Whitehead} 
A diagram of the Whitehead link, with meridians $m$ and $\mu$.}
\end{figure} 

We first recollect the calculation of character variety and the torsion function, following \cite{Tran2016JKTR.CV, Tran2018IJM,NguyenTran}. 
Afterward, we calculate the subvariety of non-acyclic representation and its multiplicity, 
as well as point out a geometric interpretation by using the $(-3,-3)$-filling, completing the proof of \Cref{thm.W}. 
We also revisit the study of twist knots $J(2,2l)$ to derive some corollaries. 

\subsection{The character variety of $W$} \label{subsec:char}
The group of $W$ has a presentation 
\[\pi_1(S^3-W) = \langle m,\mu \mid mw\mu^{-1}w^{-1}\rangle\]

with $w= \mu m \mu^{-1}m^{-1}\mu^{-1}m\mu=[\mu,m]\mu^{-1}m\mu$, 
where $m$ and $\mu$ denote the standard meridians in \cref{fig:Whitehead}. 
The relator $mw\mu^{-1}w^{-1}$ may be replaced by  
\[ [\mu,m][\mu^{-1},m] [\mu^{-1},m^{-1}][\mu,m^{-1}]. \]

Let $X(S^3-W)$ denote the ${\rm SL}_2\C$-character variety of $W$. 
For each $\sigma\in \pi_1(S^3-W)$, we associate the function $\Tr \sigma: X(S^3-W)\to \C;\, [\rho]\mapsto \Tr \rho(\sigma)$. 
Then 
\[x = \Tr m, \ \ y=\Tr \mu, \ \ z = \Tr m \mu\]
become coordinate functions. We put 
\begin{gather*}
f_1(x,y,z)=xyz^2-z(x^2+y^2+z^2)+xy+2z,\\
g(x,y,z)=x^2+y^2+z^2-xyz-4,
\end{gather*}
both of those are irreducible in $\C[x,y,z]$. 
The character variety of $W$ is described as 
\[X(S^3-W) = \{(x,y,z) \in \C^3 \mid f_1(x,y,z)g(x,y,z)=0\}.\] 
We have $f_1=g=0$ iff ($x=\pm 2$ and $y=\pm z$) or ($x=\pm z$ and $y=\pm 2$). %
The subvariety of reducible characters is $\{g=0\}$, 
and that of irreducible characters is 
\[X^{\rm irr}(S^3-W)=\{f_1=0\} \setminus 
\left(\{x=\pm 2, y=\pm z \} \cup \{y=\pm 2, x=\pm z \}\right).\]  

\subsection{The torsion function $\tau$ of $W$} \label{subsec:Tors}
Since the Reidemeister torsion of a representation $\rho$ is the value of the so-called twisted Alexander polynomial $\Delta_\rho(t_1,t_2)$ at $t_1=t_2=1$, 
\cite[Theorem 1]{NguyenTran} yields that the torsion function of $W$ is given by
\[
\label{eq:tors}
\tau(x,y,z)= 2(2+z-x-y).
\] 
\begin{proposition} \label{prop:acycl} 
The zero locus of the torsion function $\tau$ of $W$ on the variety $X^{\rm irr}(S^3-W)$ of irreducible ${\rm SL}_2\C$-characters is the 1-dimensional subvariety 
\[L:=\{x+y-1=0, z=-1\}\] 
and $\tau$ vanishes there with multiplicity two. 
The set of non-acyclic irreducible ${\rm SL}_2\C$-characters in $X^{\rm irr}(S^3-W)$ is described as  
\[L\cap X^{\rm irr}(S^3-W)= \{ x+y-1=0, \, z=-1 \}\setminus\{((2,-1,-1),(-1,2,-1)\}.\]
\end{proposition}

\begin{proof}
If we assume $\tau=2(2+z-x-y)=0$, then we have $z=x+y-2$, so we obtain $0=f_1=(x+y-1)^2(x-2)(y-2)$. 
If $x=2$, then we obtain $z=y$. If instead $y=2$, then we obtain $z=x$. 
In these two cases, these points are on $\{f_1=0\}\cap\{g=0\}$, so 
the factor $(x-2)(y-2)$ corresponds to reducible characters.  
If instead $x+y-1=0$, then $\tau=0$ yields $z=x+y-2=-1$. 
Since $\{x+y-1=0,z=-1\}\cap\{g=0\}=\{((2,-1,-1),(-1,2,-1)\}$, we obtain the assertion. 

Let $w$ be a generic point on $L$. Then, the local ring becomes
\[\left(\C[x,y,z]/(f,\tau)\right)_w=\big(\C[x,y,z]/\left((x+y-1)^2, z+1\right)\big)_w\]
with length two. Hence, by \Cref{subsec:mult}, the intersection multiplicity of $\{f_1=0\}$ and $\{\tau=0\}$ along $L$ is two. 
\end{proof}

\begin{figure}[h]
\includegraphics[width=100mm
]{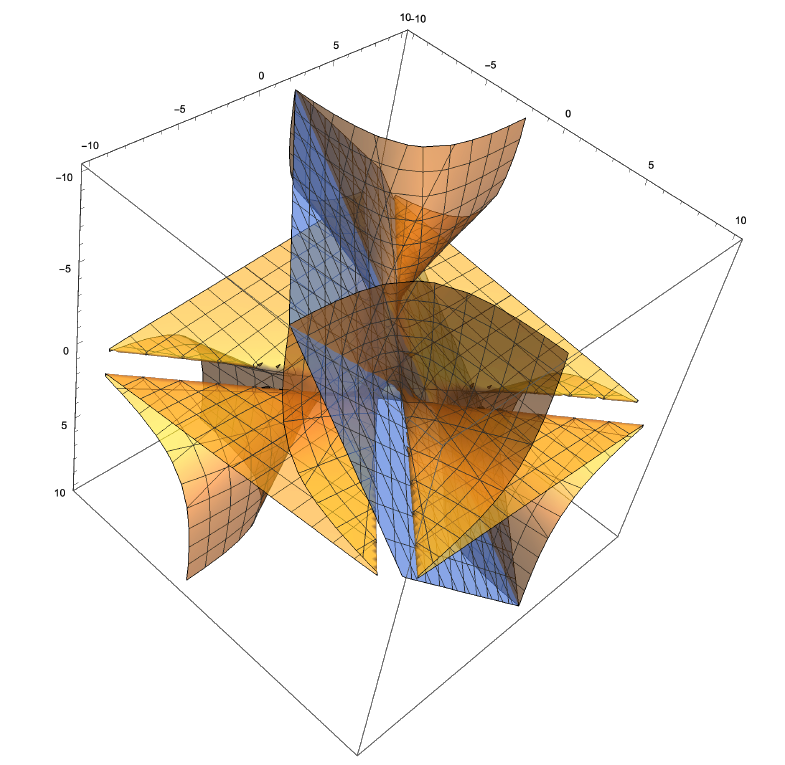}
\caption{$f(x,y,z)=0$, $\tau(x,y,z)=0$, and the line $L$}
\end{figure}

\subsection{The manifold $W(-3,-3)$}
\label{subsec:-3} 
We utilize the following well-known lemma. 
\begin{lemma} \label{lem.tr=-1}
Let $\F$ be any field. Then $A\in {\rm SL}_2\F$ satisfies $\Tr A=-1$ if and only if ${\rm ord}\,A=3$ holds.
\end{lemma}
\begin{proof} Note that $A^2-(\Tr A)A+I_2=0$ yields
$A^3-I_2
=(\Tr A+1)((\Tr A-1)A-I_2)$.

If $\Tr A=-1$, then we have $A\neq I_2$ and $A^3-I_2=O$, hence ${\rm ord}\,A=3$. 

Conversely, if ${\rm ord}\,A=3$, then we have $\Tr A+1=0$ or $(\Tr A-1)A-I_2=0$. 
Now suppose $(\Tr A-1)A-I_2=0$. 
Then, we have $\Tr A-1\neq 0$ and $A=\frac{1}{\Tr A-1}I_2$, 
so we may write $A=aI_2$. 
If ${\rm char}\,\F= 2$, then $\Tr A=2a=0\neq -1$, hence a contradiction. 
If instead ${\rm char}\,\F\neq 2$, then we have $a=\frac{a}{2a-1}$, $2a^2-2a=0$, $a=0,1$, hence again a contradiction. 
Thus, we have $\Tr A+1=0$. 
\end{proof}

\begin{proposition} 
\label{prop:M-3} 
Let $W(-3,-3)$ denote the result of $(-3,-3)$-filling along $W$. Regard its ${\rm SL}_2\C$-character variety $X(W(-3,-3))$ as a subvariety of $X(S^3-W)$. Then 
\[X^{\rm irr}(W(-3,-3))=L \cap X^{\rm irr}(S^3-W)\] 
holds. 
The fundamental group $\pi_1(W(-3,-3))$ is isomorphic to the triangle group 
\[\Gamma(3,3,\infty) \cong  \langle a,b \mid a^3 = b^3 = 1\rangle \cong \Z/3 \ast \Z/3.\]
\end{proposition}

\begin{proof}
Let $\rho$ be a non-acyclic irreducible representation of $\pi_1(S^3-W)$. 
Since $z = \Tr m\mu$, the equation $z=-1$ for the subvariety $L$ implies that $\rho(m\mu)$ has order 3. 
In addition, by using the ${\rm SL}_2\C$-trace relation $\Tr uv + \Tr u^{-1}v = \Tr u \Tr v$, we may further verify that 
$\Tr(m^2\mu)=-x-y$, so the equation $x+y-1=0$ implies that $\rho(m^2\mu)$ has order 3. 
Thus we have $L\cap X^{\rm irr}(S^3-W) \subset X^{\rm irr}(W(-3,-3))$. 

To prove the opposite direction, 
note that $(a,b) := (m\mu, m^2\mu)$ is also a generating system of the group $\pi_1(S^3-W)$. Namely, by $(m,\mu)=(ba^{-1}, ab^{-1}a)$, we obtain
\[\pi_1(S^3-W) = \langle a,b \mid  ba^{-3}bab^{-2}a^3b^{-1}a^{-1}b \rangle,\]
and we may rewrite $L= \{\Tr a=\Tr b=-1\}$. 
By \cite[Eq.(4) and Remark 1]{GuillouxWill}, we have the following peripheral basis for $\pi_1(W)$:
\begin{gather*}
\mu'=a^{-2}b = \mu^{-1}m^{-1}\mu^{-1} m \mu, \quad
 \lambda' = a^{-2}bab^{-2}ab,\\
m' = b^{-1}a = \mu^{-1}m^{-1}\mu, \quad
 \ell'=b^{-1}ab^{-1}aba^{-3}ba.
 \end{gather*}
%
Let $W(-3)=W(\bullet ,-3)$ denote the result of the $-3$-filling on $S^3-W$ along the second component of $W$. 
We obtain its group by adding the relation $m'^3=\ell'$, namely, we have 
\[\pi_1(W)/ \langle\langle m'^3\ell'^{-1} \rangle\rangle\cong \pi_1(W(-3)) = \langle a,b \mid a^3 = b^3 \rangle.\]
Note that the former relator becomes trivial there. 
By adding the relation $\mu'^3=\lambda'$, we obtain 
\begin{gather*} 
\pi_1(W(-3)) /\langle\langle \mu'^3\lambda'^{-1} \rangle\rangle \cong \pi_1(W(-3,-3))\\
= \langle a,b \mid a^3 = b^3 = 1\rangle \cong \Gamma(3,3,\infty).
\end{gather*} 

Now recall that the character variety of the free group $\langle a,b \rangle$ on two generators is isomorphic to $\C^3$, generated by the traces $\Tr a, \Tr b$ and $\Tr ab$. 
In $X^{\rm irr}(\Gamma(3,3,\infty))$, the traces of $a$ and $b$ are fixed to $-1$ by \cref{lem.tr=-1}, and the trace of $ab$ varies freely with $\Tr ab\neq 2$. 
Thus we have $X^{\rm irr}(W(-3,-3))=X^{\rm irr}(\Gamma(3,3,\infty)) \subset 
\{\Tr a=-1, \Tr b=-1, \Tr ab\neq 2\}=L\cap X^{\rm irr}(S^3-W)$. 

Combining these above, we obtain the equality in $X^{\rm irr}(S^3-W)$.  
\end{proof} 

\begin{remark}
\label{remk:components}
The closure $\ol{X^{\rm irr}}(W(-3))$ of the variety of irreducible characters of the $-3$-filling consists of two components. 
Note that any element there maps the central element $a^3=b^3$ to a central element $\pm I_2 \in {\rm SL}_2\C$. 
One of the components is the line $L$, which is the set of characters that factors through $\Gamma(3,3,\infty)$. 
Another component is described by $\{\Tr a = \Tr b = 1\}=\{z=1, x-y-1 = 0\}$ in $X(S^3-W)$. 
\end{remark}

\begin{proof}[Proof of \Cref{thm.W}] 
Propositions \ref{prop:acycl} and \ref{prop:M-3} yield the assertion.
\end{proof}

Finally we deduce \cref{cor:-3}:

\begin{proof}[Proof of \cref{cor:-3}] 
By \cite[Table 02]{MP} (see also a note right after Proposition 4 in \cite{GuillouxWill}), 
the space $W(-3,-3)$ is homeomorphic to the connected sum $L(3,1)\# L(3,1)$ of two lens spaces. 
Since the (non-)acyclicity persists under a filling away from the closed subset of meridional traces
$x=\Tr \rho(m')$ and $y=\Tr \rho(\mu')$ being 2 (\Cref{lem:MV}), 
\Cref{prop:M-3} yields that non-acyclic characters of $W(-3,-3)$ form a dense open subset of $L$ in $X^{\rm irr}(S^3-W)$. 
Since the acyclicity is an open property on $\ol{X^{\rm irr}}(S^3-W)$ (\cref{lem:open}), the non-acyclicity extends to the whole $L$. 
\end{proof}

We find \Cref{cor:-3} surprising: as we wrote before, acyclic representations are usually abundant, whereas it provides a simple example of a 3-manifold whose ${\rm SL}_2\C$-character variety is a curve that contains no acyclic irreducible representations. 

It is a Zariski open property on the character variety, and any hyperbolic $3$-manifold of finite volume possesses at least one acyclic representation, see \cite[Theorem 0.3 and Corollary 2.2]{MenalFerrerPorti2012}. In fact \cite[Theorem 0.3]{MenalFerrerPorti2012} is stated for only one cusp, but the same argument (applying Theorem 0.1) obviously works in the same way when $M$ has more cusps. For Seifert fibered $3$-manifolds, it is not difficult to see (cf.\,\cite{Kitano1994TJM}) that acyclic representations form a non-empty open subset in $X(M)$. 

\subsection{Revisiting twist knots}
\label{subsec:Twist} 
Let $r\in \Q\cup \{\infty\}$ and let $W(r)=W(r,\bullet)$ denote the result of the $r$-filling of $S^3-W$ along the first component of $W$. 
Then the variety $X^{\rm irr}(W(r))$ of irreducible ${\rm SL}_2\C$-characters of $W(r)$ may be seen as a subvariety of $X^{\rm irr}(S^3-W)$. 
Since the locus with meridional traces being two is contained in the subvariety of reducible characters of $S^3-W$, \cref{lem:MV} assures the following. 
\begin{lemma} \label{lem.capL} 
The set of non-acyclic irreducible characters of $W(r)$ is exactly the intersection 
$X^{\rm irr}(W(r))\cap X^{\rm irr}(W(-3,-3))$ in $X^{\rm irr}(S^3-W)$,  
where $X^{\rm irr}(W(-3,-3))$ $=L\cap X^{\rm irr}(S^3-W)$ is the line consisting of non-acyclic characters of $S^3-W$. 
\end{lemma} 

\begin{proof}[Proof of \Cref{cor:mult2Wr}] 
By \cref{remk:components}, the multiplicity of $D(\tau)$ on $X^{\rm irr}(W(r))$ is defined unless $r=-3$.  
\cref{lem.capL} and \cref{lem:intermult} assure that the multiplicity is at least two. 
\end{proof}

Especially if $r=-1/l$ with $l\in \Z$, then $W(-1/l)$ becomes the exterior $S^3-J(2,2l)$ of the twist knot $J(2,2l)$. 
The property ``multiplicity exactly two'' proved in \cite[Theorem A]{TTU} may be interpreted as the transversality of the intersection. 
Let us further observe the group of $W(-1/l)$. 
We put $\upsilon = m^{-1}\mu m$, which is a meridian of the second component of $W$. 
Then the surgery longitude becomes $\lambda = \upsilon^{-1} \mu \upsilon \mu^{-1}$, and we obtain the presentation 
\[ \pi_1(M_{-1/l}) = \langle m,\upsilon \mid \lambda^l m = \upsilon \lambda^l \rangle \]
of the twist knot exterior, and the coordinate 
$\alpha = \Tr \mu = \Tr \upsilon, \, \beta = \Tr \mu \upsilon$
coincides with the one used in \cite{TTU}. 
We previously proved that an irreducible representation $\rho$ of $W(-1/l)$ factors through the $-3$-filling iff $\Tr \rho(m) = \Tr \rho(m\upsilon)$ holds 
\cite[Theorem C]{TTU}. 
In our coordinate, this equation becomes $xyz-y^2-z^2-x+2 = 0$. 
By substituting $z=-1$, we obtain $-(y+1)(x+y-1)=0$. 
Since $y+1\neq 0$ on $X^{\rm irr}(S^3-W)$, we see that 
an irreducible character of $W(-1/l)$ is non-acyclic if and only if it factors through the $-3$-filling and $z=-1$ holds, recovering \cite[Theorem D]{TTU}. 

We may further prove \cref{cor.SU2} on $W(-1/l)=S^3-J(2,2l)$: 

\begin{proof}[Proof of \cref{cor.SU2}] 
A non-acyclic representation $\rho$ of $\pi_1(W(-1/l))$ satisfies 
\[ \rho(a^3) = \rho(b^3) = \Id, \quad \rho(\mu') = \rho(\lambda')^l.\]
Hence, by $\rho(a^{-2}) = \rho(a), \rho(b^{-2}) = \rho(b)$, the second relation becomes
\[\rho(ab) = \rho(ab)^{3l}.\]
So the representation $\rho$ factors through a representation $\Gamma(3,3,|3l-1|)\to {\rm SL}_2\C$.
\end{proof}

\section{Odd-twisted Whitehead links} 
We now consider the family of odd-twisted Whitehead links. 
We continue to work on ${\rm SL}_2\C$. 
We prove that the character variety is smooth in \cref{subsec:smooth}. In \cref{subsec:vanish} we compute the vanishing multiplicity of the Reidemeister torsion on the character variety of the twisted Whitehead link, and prove \cref{theo:TW}. Then in \cref{subsec:double} we prove \cref{coro:doubletwist}. 

\subsection{Twisted Whitehead links $W_k$}
\label{sec:TW} 
For each $k\in \Z$, the $k$-twisted Whitehead link $W_k$ is defined by the following diagram. 
We have that $W_{-1}$ is the trivial two-component link, $W_0$ is the torus link $T(2,4)$, and $W_1$ is the Whitehead link $W$. 
For each $k\in \Z$, $W_k$ and $W_{-k-2}$ are the mirror images of each other. 

\begin{figure}[h]
\def\svgwidth{0.5\columnwidth}
\begingroup%
  \makeatletter%
  \providecommand\color[2][]{%
    \errmessage{(Inkscape) Color is used for the text in Inkscape, but the package 'color.sty' is not loaded}%
    \renewcommand\color[2][]{}%
  }%
  \providecommand\transparent[1]{%
    \errmessage{(Inkscape) Transparency is used (non-zero) for the text in Inkscape, but the package 'transparent.sty' is not loaded}%
    \renewcommand\transparent[1]{}%
  }%
  \providecommand\rotatebox[2]{#2}%
  \newcommand*\fsize{\dimexpr\f@size pt\relax}%
  \newcommand*\lineheight[1]{\fontsize{\fsize}{#1\fsize}\selectfont}%
  \ifx\svgwidth\undefined%
    \setlength{\unitlength}{352.32443718bp}%
    \ifx\svgscale\undefined%
      \relax%
    \else%
      \setlength{\unitlength}{\unitlength * \real{\svgscale}}%
    \fi%
  \else%
    \setlength{\unitlength}{\svgwidth}%
  \fi%
  \global\let\svgwidth\undefined%
  \global\let\svgscale\undefined%
  \makeatother%
  \begin{picture}(1,0.72519641)%
    \lineheight{1}%
    \setlength\tabcolsep{0pt}%
    \put(0,0){\includegraphics[width=\unitlength,page=1]{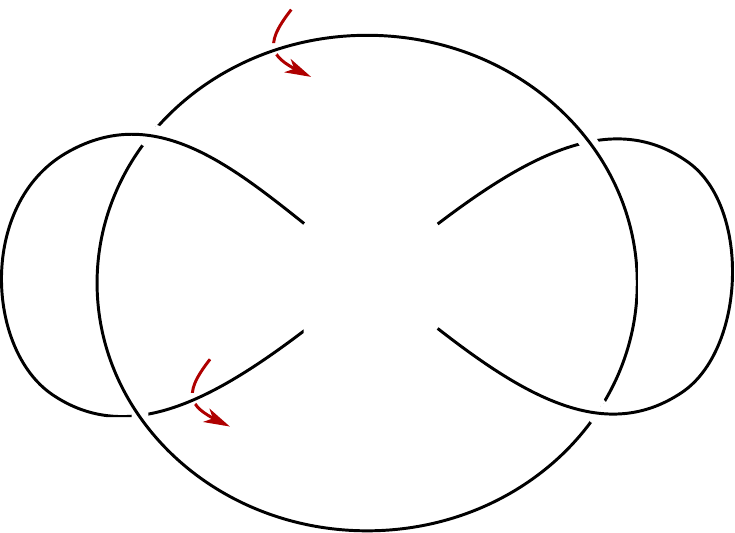}}%
    \put(0.39681741,0.71209146){\makebox(0,0)[lt]{\lineheight{1.25}\smash{\begin{tabular}[t]{l}$m$\end{tabular}}}}%
    \put(0.28615242,0.2357343){\makebox(0,0)[lt]{\lineheight{1.25}\smash{\begin{tabular}[t]{l}$\mu$\end{tabular}}}}%
    \put(0,0){\includegraphics[width=\unitlength,page=2]{Twisted.pdf}}%
    \put(0.45334371,0.38607616){\makebox(0,0)[lt]{\lineheight{1.25}\smash{\begin{tabular}[t]{l}$k$\\half\\twists\end{tabular}}}}%
  \end{picture}%
\endgroup%

\caption{\label{fig:TwistedW} A diagram of twisted Whitehead link $W_k$ with meridians $m$ and $\mu$.} 
\end{figure} 
Its group has a presentation 
\[ \pi_1(S^3-W_k)= \langle m, \mu \mid m\omega = \omega m \rangle, \] 
where 
\[ \omega = \begin{cases}
(\mu m \mu^{-1} m^{-1})^n m (m^{-1} \mu^{-1} m \mu)^n &\text{ if } k = 2n-1\\
(\mu m \mu^{-1} m^{-1})^n \mu m \mu (m^{-1} \mu^{-1} m \mu)^n &\text{ if } k = 2n
\end{cases}
\] 
with $n\in \Z$, 
and $m,\mu$ denote the meridians displayed in \cref{fig:TwistedW}. 

The third author initially calculated the character variety of $W_k$ in \cite{Tran2016JKTR.CV} and, afterward, he did it again in a different way, as well as detecting the geometric component in \cite{Tran2018IJM}.  
The torsion function was studied in detail in \cite{NguyenTran}. 
Although in the literature they assumed $k\geq 0$, the recurrence formula applies to any $k\in \Z$, so the results persist for any $k\in \Z$. 

We define the Chebychev polynomials $S_k(v)\in \Z[v]$ $(k\in\Z)$ of the second kind by 
\[S_0(v) = 1, \, S_1(v) = v, \, S_{k+2}(v) = vS_{k+1}(v) - S_k(v).\] 
We have $S_{k-1}(v)=-S_{-k-1}(v)$ and $S_{k-1}(\pm 2)=(\pm 1)^k k$. 
The following lemma is easy to see. 
\begin{lemma} \label{lem:Cheb} 
For any $k\in \Z$, if $v= w + w^{-1}$ with $w\in \C$, then $S_{k-1}(v) = \frac {w^k - w^{-k}}{w-w^{-1}}$. 
If $k\neq 0$, then we have 
\[S_{k-1}(v)=\prod_{0<j<|k|}(v-2\cos \frac{j\pi}{k}).\] 
\end{lemma}

\subsection{The character variety of $W_{2n-1}$} \label{sec:tW} 

In the following, we assume $0\neq n \in \Z$. 
We again use the coordinate functions \[x = \Tr m,\ y=\Tr \mu,\ z = \Tr m \mu.\]  
In addition, we put \[v = \Tr (\mu m \mu^{-1} m^{-1}),\]
so that we have  
$g(x,y,z)=x^2 + y^2 + z^2 - xyz - 4=v-2$ and $S_k(v)\in \Z[x,y,z]$. 
Put 
\[f_n(x,y,z)=\ xy S_{n-1}(v) - (xy-z) S_{n-2}(v) -  z S_n(v) \in \Z[x,y,z].\]
Then the character variety of $W_{2n-1}$ is described as 
\[X(S^3-W_{2n-1})=\{(v+2)f_n(x,y,z)S_{n-1}(v)=0\}.\]
The first term $v+2=g(x,y,z)$ corresponds to reducible characters. 
The middle term $f_n(x,y,z)$ corresponds to the geometric component, that is, the class of holonomy representations is there. 
The last term satisfies $S_{n-1}(v)=\prod_{0<j<|n|}(v-2\cos \frac{j\pi}{n})$, so it corresponds to $|n|-1$ components. 

\subsection{Smoothness}
\label{subsec:smooth}
In this subsection, we prove \cref{prop:smooth} asserting that 
every component of the character variety $X(S^3-W_{2n-1})$ of $W_{2n-1}$ is smooth.

\subsubsection{The geometric component}
We show that the following system has no solution: 
\[
\left\{ 
\begin{alignedat}{4}
f\ &= xy S_{n-1}(v) - (xy-z) S_{n-2}(v) -  z S_n(v) &=0\\
f_x &= (xy S'_{n-1}(v) - (xy-z) S'_{n-2}(v) -  z S'_n(v))(2x-yz) + y (S_{n-1}(v) - S_{n-2}(v))\ &=0\\
f_y &= (xy S'_{n-1}(v) - (xy-z) S'_{n-2}(v) -  z S'_n(v))(2y-xz) + x (S_{n-1}(v) - S_{n-2}(v))\ &=0\\
f_z &= (xy S'_{n-1}(v) - (xy-z) S'_{n-2}(v) -  z S'_n(v))(2z-xy) -  (S_{n}(v) - S_{n-2}(v)) &=0\\
\end{alignedat}
\right.
\]
where $f := f_n$ is the polynomial of the geometric component, $f_x, f_y, f_z$ stand for the partial derivatives of $f$, and $S_k'(v)$ denotes the derivative of $S_k$ in the variable $v$. 
The proof splits into a disjunction of Cases 1--4.

\medbreak

\paragraph{\bf Case 1} Suppose $x^2-y^2 \neq 0$. 
Then $f_x=f_y=0$ yields 
\[xy S'_{n-1}(v) - (xy-z) S'_{n-2}(v) -  z S'_n(v)=\frac{xf_x-yf_y}{2(x^2-y^2)}=0.\]
Since we have $(x,y)\neq (0,0)$, at least one of $f_x=0$ or $f_y=0$ yields 
\[S_{n-1}(v) - S_{n-2}(v)=0.\]
In addition, by $f_z=0$, we have 
\[S_{n}(v) - S_{n-2}(v)=0.\]
This never holds by the following \Cref{lem.Snn-1n-2}. 

\begin{lemma} \label{lem.Snn-1n-2}
No $v\in \C$ satisfies the condition 
\[S_n(v) = S_{n-1}(v) = S_{n-2}(v).\]
\end{lemma} 
\begin{proof}
Suppose that these equations hold.  
Then, the recurrence formula yields 
\[(2-v) S_{n-1}(v) = 0. \]
Since we have $S_k(2)= k+1$ for every $k\in \Z$, if $v=2$, then we obtain a contradiction. 
If instead $S_{n-1}(v) = 0$, then we have $S_n(v) = S_{n-2}(v) =0$ as well, contradicting to \Cref{lem:Cheb}. 
This completes the proof. 
\end{proof}

\medbreak

\paragraph{\bf Case 2} Suppose $(x,y) = (0,0)$. Then the system becomes 
\[
\left\{ 
\begin{alignedat}{4}
z(S_{n-2}(v) - S_n(v))&=0\\
x&=0\\
y&=0\\
2z^2 (S'_{n-2}(v) - S'_n(v)) - (S_{n-2}(v) - S_n(v)) &=0 
\end{alignedat}
\right.
\]
If $z=0$, then we have $v = x^2+y^2+z^2-xyz-2=-2$. 
Since $S_k(-2) = (-1)^k (k+1)$ for every $k\in \Z$, 
we obtain $S_n(-2) - S_{n-2}(-2) =(-1)^n2 \neq 0$, hence a contradiction. 

If instead $z\neq 0$ and $S_{n-2}(v) - S_n(v)=0$, 
then we have $S'_{n-2}(v) - S'_n(v)=0$, 
which contradicts to the following \cref{lem:Sep} asserting that every root of $S_{n-2}(v) - S_n(v)$ has multiplicity one. 

\begin{lemma}
\label{lem:Sep}
If $0\neq k\in \Z$, then we have 
\[S_{k}(v) - S_{k-2}(v) = \prod_{0\leq j <|k|} \left( v -  2\cos \frac{(2j+1)\pi}{2|k|} \right).\]
\end{lemma}

\begin{proof} 
If $0<k$, then \cref{lem:Cheb} assures that $S_{k} - S_{k-2}$ is a polynomial of degree $n$ in $v$ with leading coefficient 1. 
If we put $v = 2\cos \frac{(2j+1)\pi}{2k}$ with $0 \leq j < k$, then by
\[S_k(v) = \frac{\sin \frac{(k+1)(2j+1)\pi}{2k}}{\sin \frac{(2j+1)\pi}{2k}},\]
we obtain   
\[
S_{k}(v) - S_{k-2}(v) = \frac{\sin \frac{(k+1)(2j+1)\pi}{2k} - \sin \frac{(k-1)(2j+1)\pi}{2k} }{\sin \frac{(2j+1)\pi}{2k}} =0.
\]

By $S_{k-1}(v)=-S_{-k-1}(v)$, we have 
$S_{k}(v) - S_{k-2}(v) = -S_{-k-2} +S_{-k}(v)=S_{-k}(v)-S_{-k-2}$, 
so we obtain the cases with $k<0$. 
(We remark that $S_0-S_{-2}=1-(-1)=2\neq 1$.) 
\end{proof} 

\medbreak

\paragraph{\bf Case 3} Suppose $x=y \neq 0$. Then the system becomes
\[
\left\{ 
\begin{alignedat}{4}
f\ &= x^2 (S_{n-1}(v) -S_{n-2}(v))  -  z (S_n(v)-S_{n-2}(v))\ &=0\\
f_x &=h(x,z,v)x(2-z) + x (S_{n-1}(v) - S_{n-2}(v))\ &=0\\
f_z &=h(x,z,v)(2z-x^2) -  (S_{n}(v) - S_{n-2}(v)) &=0\\
\end{alignedat}
\right.
\]
with $h(x,z,v)= x^2 S'_{n-1}(v) - (x^2-z) S'_{n-2}(v) -  z S'_n(v)$. 

We have $S_{n}(v)-S_{n-2}(v)\neq 0$. Indeed, if $S_{n}(v)-S_{n-2}(v)=0$, then we have $0=f/x^2=S_n(v)-S_{n-2}(v)$, contradicting to \Cref{lem.Snn-1n-2}. 

Hence, $f_z=0$ yields $h(x,z,v)(2z-x^2)\neq 0$. 
By 
\[0=f-xf_x-zf_z=-2h(x,z,v)(x^2-x^2z+z^2),\]
we obtain $x^2-x^2z+z^2=0$, so $v=2x^2+z^2-x^2z-2=x^2-2$. 

We have $v\neq \pm 2$. Indeed, $x\neq 0$ implies $v\neq -2$. 
In addition, if $v=2$, then by $S_{k-1}(2)=k$, we have $0=f=x^2-2z$ and $0=f_z=0-2$, hence a contradiction. 

Since $x\neq 0$, we may write $x = a + a^{-1}$ with $a\in \C$. 
Note that $v = x^2-2 = a^2 + a^{-2}$ and $a^2 \neq \pm1$. 
By $S_{k-1}(v) = \frac{a^{2k} - a^{-2k}}{a^2-a^{-2}}$, we have 
\[z = x^2 \frac{S_{n-1}(v) - S_{n-2}(v)}{S_{n}(v) - S_{n-2}(v)} = x^2 \frac{a^{4n} + a^2}{(a^2+1)(a^{4n} +1)} = x \frac{a^{4n-1} + a}{a^{4n} +1},\] 
\begin{eqnarray*}
z^2 - x^2 z + x^2 &=& x^2 \left( \frac{(a^{4n-1} + a)^2}{(a^{4n} +1)^2} - (a+a^{-1}) \frac{a^{4n-1} + a}{a^{4n} +1}+ 1\right) \\
&=& x^2 \left( \frac{a^{4n-1} + a}{a^{4n} +1} - a^{-1}  \right)  \left( \frac{a^{4n-1} + a}{a^{4n} +1} - a\right) \\
&=& - x^2 \, \frac{a-a^{-1}}{a^{4n} +1} \, \frac{(a-a^{-1})a^{4n}}{a^{4n} +1} \\
&\not=& 0, 
\end{eqnarray*}
hence a contradiction. 

\medbreak

\paragraph{\bf Case 4} Suppose $x=- y \neq 0$. Then the system becomes 
\[
\left\{ 
\begin{alignedat}{4}
f\ &= -x^2 (S_{n-1}(v) -S_{n-2}(v))  -  z (S_n(v)-S_{n-2}(v))\ &=0\\
f_x &=h(x,z,v)x(2+z) + x (S_{n-1}(v) - S_{n-2}(v))\ &=0\\
f_z &=h(x,z,v)(2z+x^2) -  (S_{n}(v) - S_{n-2}(v)) &=0\\
\end{alignedat}
\right.
\]
with $h(x,z,v)= -x^2 S'_{n-1}(v) + (x^2z) S'_{n-2}(v) -  z S'_n(v)$. 
A similar argument to Case 3 yields that $h(x,z,v)\neq 0$, $x^2+x^2z+z^2=0$, 
and $v=x^2-2\neq \pm 2$. 
We may write $x=a+a^{-1}$. We have 
\[z = - x^2 \frac{S_{n-1}(v) - S_{n-2}(v)}{S_{n}(v) - S_{n-2}(v)} = - x \frac{a^{4n-1} + a}{a^{4n} +1},\] 
\[z^2 + x^2 z + x^2 
= x^2 \left( \frac{(a^{4n-1} + a)^2}{(a^{4n} +1)^2} - (a+a^{-1}) \frac{a^{4n-1} + a}{a^{4n} +1}+ 1\right) \neq 0,
\]
hence a contradiction. 
Thus, the system $f = f_x = f_y = f_z =0$ has no solutions.

\subsubsection{The other components}
There are $|n|-1$ other components given by $S_{n-1}(v)=\prod_{0<j<|n|}(v-2\cos \frac{j\pi}{n})=0$. 
Recall that $v=x^2+y^2+z^2-xyz-2$. 
For each $j$, we consider the system
\[
\left\{ 
\begin{alignedat}{4}
v-2\cos \frac{j\pi}{n}&=0\\
2x-yz&=0\\
2y-xz &=0\\
2z-xy &=0.\\
\end{alignedat}
\right.
\] 
The solution of the last three equations is 
$x=y=z=0$ or $x,y,z\in \{\pm 2\}$ with $xyz=8$, 
so we have $v=\mp 2\neq 2\cos \frac{j\pi}{n}$. 
Thus, there is no solution. 

This completes the proof of \cref{prop:smooth}. 

\subsection{The vanishing multiplicities of the torsion functions} 
\label{subsec:vanish}

In this subsection, we compute the vanishing multiplicity of the torsion functions of $W_{2n-1}$'s 
and complete the proof of \cref{theo:TW}.  

We put $P_k(v) = (S_{k+1}(v) - S_k(v)-1)/(v-2) \in \Z[v]$. 
By \cite[Corollary 2]{NguyenTran}, 
the torsion function of $W_{2n-1}$ becomes 
\[ \tau_n(x,y,z)=(2-x-y+z)S_{n-1}(v) + (4-2x-2y+xy)P_{n-2}(v). \]
(Note that $\tau_{M_{2n-1},p}^\rho$ in \cite[Corollary 2]{NguyenTran} is the torsion of a surgered manifold, so we have 
$\tau_{M_{2n-1},p}^\rho = \tau_n(x,y,z) \frac{2}{x-2}$.)  

We consider the elements of $\C[x,y,z,w]$ defined by 
\begin{gather*}
f_n(x,y,z,v)=xy S_{n-1}(v) - (xy-z) S_{n-2}(v) -  z S_n(v),\\ 
\tau_n(x,y,z,v)=(2-x-y+z)S_{n-1}(v) + (4-2x-2y+xy)P_{n-2}(v). 
\end{gather*} 
In order to calculate the multiplicity of the divisor of the torsion function on $X^{\rm irr}(S^3-W_{2n-1})$, 
it suffices to study the ideal of $\C[x,y,z,v]$ generated by 
\[g(x,y,z)-2-v,\ f_n(x,y,z,v) S_{n-1}(v),\ \tau_n(x,y,z,v).\]

\subsubsection{The non-geometric components} 
We first consider the components defined by $S_{n-1}(v)=\prod_{0<j<|n|}(v-2\cos\frac{j\pi}{n})=0$ and prove the following.
\begin{proposition} \label{prop.Sv} 
On the subvariety of $X^{\rm irr}(S^3-W_{2n-1})$ defined by $v-2\cos\frac{j\pi}{n}=g(x,y,z)-2-2\cos\frac{j\pi}{n}=0$ with $0<j<|n|$, 

{\rm (1)} If $j$ is even, then $\tau_n=0$ holds identically. 

{\rm (2)} If $j$ is odd, then the set of $\tau_n=0$ is the union of 4 lines, that is, 
\[
\{x=2,\  y-z= \pm 2\sqrt{-1} \sin \frac{j\pi}{2n}\}\cup 
\{y=2,\  x-z= \pm 2\sqrt{-1} \sin \frac{j\pi}{2n}\}, \]
and $\tau_n$ vanishes there with multiplicity 1. 
\end{proposition} 

\begin{proof} 
Assume $v=2\cos \frac{j\pi}{n}$. Then by $S_{n-1}(v)=0$, we have $\tau_n=(2-x)(2-y)P_{n-2}(v)$. 
In addition, 
by $S_{n-2}(v)=\frac{\sin\frac{j(n-1)\pi}{n}}{\sin\frac{k\pi}{n}}=(-1)^{j-1}$, 
we have $P_{n-2}(v)=\frac{-(-1)^{j-1}-1}{v-2}$. 

(1) If $j$ is even, then by $P_{n-2}(v)=0$, we have $\tau_n=0$ identically. 

(2) If $j$ is odd, then we have $\tau_n= (2-x)(2-y)\frac{-2}{v-2}$ with $v-2\neq 0$. 
If $x=2$, then we have  $g-v-2=(y-z)^2-(v-2)=(y-z)^2-4(\sin \frac{j\pi}{2n})^2
=(y-z+2\sqrt{-1}\sin \frac{j\pi}{2n})(y-z-2\sqrt{-1}\sin \frac{j\pi}{2n})
$. 
If $y=2$, then we have similar. 
Since the vanishing multiplicity of $\tau_n$ may be seen as the length of the localization of the ring 
\[\C[x,y,z,v]/(g-v-2, v-2\cos \frac{j\pi}{n}, (2-x)(2-y))\] 
at a generic point on  $v-2\cos\frac{j\pi}{n}=0$, 
except for the intersection of components of $\{\tau=0\}$, we have multiplicity 1. 
\end{proof}

\subsubsection{The geometric component} 
\label{subsec:case2} 
We next prove the following. 

\begin{proposition} \label{prop.gc} 
On the subvariety of $X^{\rm irr}(S^3-W_{2n-1})$ defined by 
\[xy S_{n-1}(v) - (xy-z) S_{n-2}(v) -  z S_n(v)=0\]
and $S_{n-1}(v) \neq 0$, $\tau_n$ vanishes with multiplicity 2. 
\end{proposition}

\begin{proof} Consider the ideal of $\C[x,y,z,v]$ generated by 3 elements 
\begin{gather*} q(x,y,z,v)=\ x^2 + y^2 + z^2 - xyz - 2 - v,  \\
f_n(x,y,z,v) = xy S_{n-1}(v) - (xy-z) S_{n-2}(v) -  z S_n(v),\\
\tau_n(x,y,z,v) = (2-x-y+z)S_{n-1}(v) + (4-2x-2y+xy)P_{n-2}(v).
\end{gather*} 
Choose any prime ideal $\mf{p}$ underlying the primary decomposition of this ideal and consider the localization of $\C[x,y,z,v]_{\mf{p}}$.
Then the polynomials $S_{n-1}(v),$ $v-2,$ $T_n(v)-2,$ and $2 T_n(v) -v +2$ are inveretlible there. 
Let $\doteq$ denote the equality up to multiplication by units in each ring. 

\medbreak

First, in the local ring $\C[x,y,z,v]_{\mf{p}}$, we have 
\[ \tau_n(x,y,z,v)\doteq  (x + y -2) \left(1+ \frac{2P_{n-2}(v)}{S_{n-1}(v)}\right)- xy\frac{P_{n-2}(v)}{S_{n-1}(v)} - z.\] 
Hence, in $\C[x,y,z,v]_\mf{p}/(\tau_n)$, we have 
\begin{align*} f_n=&\ xy (S_{n-1}(v)  - S_{n-2}(v)) -  z (S_n(v) - S_{n-2}(v) )\\
=&\ xy (S_{n-1}(v)  - S_{n-2}(v)) -
 \left( (x + y -2)(1+ \frac{2P_{n-2}(v)}{S_{n-1}(v)})- xy\frac{P_{n-2}(v)}{S_{n-1}(v)} \right)(S_n(v) - S_{n-2}(v) )\\
= &\ xy \left( S_{n-1}(v)  - S_{n-2}(v) + \frac{P_{n-2}(v)}{S_{n-1}(v)}  (S_n(v) - S_{n-2}(v) )\right) \\ 
 & -
(x + y -2)\left(1+ \frac{2P_{n-2}(v)}{S_{n-1}(v)}\right) (S_n(v) - S_{n-2}(v) ).
\end{align*}
Write $v = a+ a^{-1}$. Then $S_k(v) = (a^{k+1} - a^{-k-1})/(a-a^{-1})$. By a direct calculation, we further obtain 
\begin{align*} f_n=&\ (2a^n + 2 a^{-n} - a - a^{-1} +2) xy - (a^n + a^{-n})(2+a+a^{-1})(x+y-2)\\
\dot{=}&\ xy- \frac{(a^n + a^{-n})(2+a+a^{-1}) }{2a^n + 2 a^{-n} - a - a^{-1} +2} (x+y-2).
\end{align*}
In $\C[x,y,z,v]_\mf{p}$ modulo the right hand side, we further obtain 
\[\tau_n(x,y,z,v)\doteq \frac{a^n + a^{-n}+a^{n-1} + a^{1-n} }{2a^n + 2 a^{-n} - a - a^{-1} +2} (x+y-2) -z.\] 

Therefore, in $\C[x,y,z,v]_\mf{p}/(f_n,\tau_n)$, we have 
\begin{align*}
q&=x^2 + y^2 + z^2 - xyz - 2 - v\\ 
&=(x+y)^2 + z^2 -2-v- xy (z+2)\\
&=(x+y)^2 + \left(\frac{a^n + a^{-n}+a^{n-1} + a^{1-n} }{2a^n + 2 a^{-n} - a - a^{-1} +2}\right)^2 (x+y-2)^2 -2-(a+a^{-1})\\
&\phantom{=} - \frac{(a^n + a^{-n})(2+a+a^{-1}) }{2a^n + 2 a^{-n} - a - a^{-1} +2} (x+y-2) \left(\frac{a^n + a^{-n}+a^{n-1} + a^{1-n} }{2a^n + 2 a^{-n} - a - a^{-1} +2} (x+y-2)+2 \right) \\
&=(a+a^{-1}-2) \left( x + y - \frac{a+a^{-1} +2}{a^n+a^{-n}+2}\right)^2\\
&=(v-2)\left( x + y - \frac{v+2}{T_n(v)+2}\right)^2\\ 
&\doteq \left( x + y - \frac{v+2}{T_n(v)+2}\right)^2. 
\end{align*}
Here $T_k(v)$'s are the Chebyshev polynomials of the first kind defined by $T_0(v) =2$, $T_1(v) =v$ and $T_k(v) = v T_{k-1}(v) - T_{k-2}(v)$ for all $k \in \Z$. We have  $T_k(a+a^{-1})=a^k + a^{-k}$. 

Thus the ideal $(g,f_n,\tau_n)$ of $\C[x,y,z,v]_\mf{p}$ is generated by 3 polynomials  
\begin{gather*}
z - \frac{T_n(v) + T_{n-1}(v) }{2T_n(v) - v+2} (x+y-2), \\
xy - \frac{T_n(v)(2+v) }{2T_n(v) - v+2} (x+y-2), \\
\left(x + y - \frac{v+2}{T_n(v)+2}\right)^2
\end{gather*} 
and we have an isomorphism 
\[\C[x,y,z,v]_\mf{p}/(q,f_n,\tau_n)\cong \C[x,y,v]_\mf{p}/(g,h^2)\]
where we put 
\[g= \frac{T_n(v)(2+v) }{2T_n(v) - v+2} (x+y-2)-xy, \quad h = x + y - \frac{v+2}{T_n(v)+2}.\]

Since the multiplicity is additive \cite[Appendix A, Theorem 1.1]{Hartshorne}, 
it suffices to show that the varieties of $g$ and $h$ intersect generically transversely. 
For this, note that $T_n'(v) = n S_{n-1}(v)$. 
Up to multiplication by units in the local ring $\C[x,y,v]_\mf{p}/(g,h)$, the Jacobian matrix becomes 
\[ \mathcal J(g,h)= \bma (2T_n(v) - v+2)y - T_n(v)(2+v) &T_n(v)+2 \\
 (2T_n(v) - v+2)x - T_n(v)(2+v) & T_n(v)+2 \\
 \partial_v g & \partial_v h\ema\] 
and the first $2\times 2$ minor is
$M = (T_n(v) +2) (x-y)(2T_n(v)-v+2)$.
This term is non-zero apart from a strict subvariety of the intersection $X_g \cap X_h$ of the varieties of $g$ and $h$, so the intersection is generically transversal. This completes the proof. 
\end{proof}

\begin{proof}[Proof of \cref{theo:TW}] 
By Propositions \ref{prop.Sv} and \ref{prop.gc}, the torsion vanishes with multiplicity two on the variety of irreducible representations of the twisted Whitehead link. 
\end{proof} 

\subsection{Double twist knots $J(2n,2l)$}
\label{subsec:double}
In this section, we consider $(-1/l)$-filling on the twisted Whitehead link $W_{2n-1}$. The resulting knots are genus one two-bridge knots $J(2n,2l)$. 
We deduce from \cref{theo:TW} that the torsion vanishes with multiplicity at least two on their character variety, proving \cref{coro:doubletwist}.

\begin{proof}[Proof of \cref{coro:doubletwist}] 

Suppose $\rho: \pi_1(S^3 - W_{2n-1})  \to {\rm SL}_2\C$ is a nonabelian representation of the form
\[
\rho(a) = \left[ \begin{array}{cc}
s_1 & 1 \\
0 & s_1^{-1} \end{array} \right] \quad \text{and} \quad 
\rho(b) = \left[ \begin{array}{cc}
s_2 & 0 \\
u & s_2^{-1} \end{array} \right]
\]
where 
\[
\big( xy S_{n-1}(v) -  (xy-z)  S_{n-2}(v) - z S_n(v) \big) S_{n-1}(v) = 0.
\]
Here $x = s_1 + s_1^{-1}$, $y = s_2 + s_2^{-1} $, $z = s_1 s_2 + s_1^{-1} s_2^{-1} + u$ and $v = x^2 + y^2 + z^2 - xyz-2$ are the traces of the images of $a$, $b$, $ab$ and $bab^{-1}a^{-1}$ respectively.

We first claim that if $\rho$ satisfies $S_{n-1}(v)=0$, then it cannot be extended to a representation $\pi_1(S^3 - J(2n,2l)) \to {\rm SL}_2\C$. Indeed, let $\lambda_a$ be the canonical longitude corresponding to the meridian $a$ of $W_{2n-1}$. By \cite{Tran2018IJM}, we have
\[ \rho(\lambda_a) = \left[ \begin{array}{cc}
\xi_a & * \\
0 & \xi_a^{-1} \end{array} \right],\]
where 
\[\xi_a  = 
\begin{cases}
1 & \text{if } \, S_{n-1}(v) = 0,\\
\frac{s_1 y - z}{-s_1^{-1}y+z} & \text{if } \, xy S_{n-1}(v) -  (xy-z)  S_{n-2}(v) - z S_n(v) = 0 \text{ and } S_{n-1}(v) \not=  0,
\end{cases}\]
and
\[
*  = 
\begin{cases}
0 & \text{if } \, S_{n-1}(v) = 0,\\
\frac{y(xy-2z)}{xyz-y^2-z^2} & \text{if } \, xy S_{n-1}(v) -  (xy-z)  S_{n-2}(v) - z S_n(v) = 0 \text{ and } S_{n-1}(v) \not=  0.
\end{cases}
\]
Note that there is a small error in \cite{Tran2018IJM} : the canonical longitude in \cite{Tran2018IJM}  is actually the inverse of the canonical longitude. 

If $S_{n-1}(v) = 0$, then we have $\xi_a =1$ and $* = 0$, so $\rho(\lambda_a) = \Id$. 
Since $\rho(a) \not=\Id$, we obtain $\rho(a \lambda^{-l}_a) \not= \Id$, which means that $\rho: \pi_1(S^3 - W_{2n-1}) \to {\rm SL}_2\C$ on the components $v= 2\cos(j\pi/n)$ does not factor through a representation $\pi_1(S^3 - J(2n,2l)) \to {\rm SL}_2\C$.

Since any non-acyclic representation $\rho: \pi_1(S^3 - W_{2n-1}) \to {\rm SL}_2\C$ with  $xy S_{n-1}(v) -  (xy-z)  S_{n-2}(v) - z S_n(v) = 0$ and $S_{n-1}(v) \not=0$ has multiplicity $2$, we obtain the assertion. 
\end{proof} 

\section{$L$-functions of universal deformations} 
\label{sec:L}
In this section, we pursue the study of the $L$-functions of universal deformations, raised from a viewpoint of number theory. 
The main goal is to interpret the ``multiplicity two'' phenomenon for the $L$-functions of the odd-twisted Whitehead links $W_{2n-1}$. 

\subsection{Multiplicity two} 
As we have explained in \Cref{subsec:mult}, the multiplicity may be seen as the valuation in a local ring, and it involves some subtlety. Mazur asked in \cite{Mazur2000} a question on the multiplicity of a certain divisor called the $L$-function for an analogous situation in algebraic geometry over $\C$. 
Morishita and others \cite{KMTT2018} proposed to study its infinitesimal version in the topological setting to observe a finer analogy of the Galois deformation theory in number theory. Continuing their works, we aim to study the cases with coefficients in extensions of $\Z_p$. 

For this purpose, we first paraphrase the definition of ``multiplicity two'' over $\C$ as follows. 
\begin{lemma} \label{lem.mult-two}
Let $f, g\in \C[x,y,z]$, assume that $f$ is irreducible, and let $(A,B,C)$ be a common zero of $f$ and $g$. 
Assume $\partial_z f (A,B,C)\neq 0$ and let $z_f$ denote the implicit function of $f=0$ at $(A,B,C)$. 
Then, the following two properties are equivalent. 

{\rm (i)} The point $(A,B,C)$ is a common zero of the divisors $(f)$ and $(g)$ with multiplicity two. 

{\rm (ii)} The point $(A,B)$ is a zero of $\mca{L}(x,y):=g(x,y,z_f(x,y))$ with multiplicity two in the sense that 
the 1st Taylor polynomial of $\mca{L}(x,y)$ at $(A,B)$ vanishes but the 2nd does not. 
\end{lemma}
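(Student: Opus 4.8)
The plan is to trivialize the surface $X=\{f=0\}$ near the point, transport everything to the regular local ring of $\C^2$ at $(a,b)$, and there reduce ``multiplicity two of a divisor'' to the elementary fact that $\wt g$ is divisible by the square of a local equation vanishing at the point. First I would record that the hypothesis $\partial_z f(a,b,c)\neq 0$ forces $\nabla f(a,b,c)\neq 0$, so $(a,b,c)$ is a \emph{smooth} point of $X$. The projection $(x,y,z)\mapsto(x,y)$ then restricts to a local isomorphism from $X$ near $(a,b,c)$ onto a neighborhood of $(a,b)$, with inverse $(x,y)\mapsto(x,y,z_f(x,y))$, where $z_f$ is the implicit function (from the analytic/formal implicit function theorem, or from Hensel's lemma in the arithmetic setting of \cref{theo:L}). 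Working in the completion $R=\C[\![x-a,y-b]\!]$ (respectively $O[\![x-a,y-b]\!]$), where $z_f$ and $L_{\bs\rho}$ naturally live, this identification realizes the local ring $O_{X,(a,b,c)}$ as $R$, with maximal ideal $\mf m=(x-a,y-b)$, and carries the restriction $g|_X$ to $\wt g:=g(x,y,z_f(x,y))\in R$.

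Next I would unwind the multiplicity-two hypothesis in these coordinates. Since $f$ is irreducible, $X$ is an irreducible surface and we are in the setting of \cref{subsec:mult}. Let $C$ be the irreducible component of the divisor of $g|_X$ through $(a,b,c)$, and $\mf p\subset R$ its prime ideal. By the definition in \cref{subsec:mult}, ``multiplicity two'' means that the valuation of $\wt g$ in the discrete valuation ring $R_{\mf p}=O_{X,C}$ equals $2$. The crucial input is that $R$, being a complete regular local ring of dimension two, is a UFD (Auslander--Buchsbaum), so its height-one prime $\mf p=(h)$ is principal and the valuation simply counts the exponent of the prime $h$ in a factorization. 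Hence $\wt g=h^2\,w$ in $R$ with $h\nmid w$.

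Finally I would conclude. The point lies on $C$, so $h(a,b)=0$, i.e.\ $h\in\mf m$; therefore $\wt g=h^2 w\in\mf m^2$. Equivalently $\wt g(a,b)=0$ and $\partial_x\wt g(a,b)=\partial_y\wt g(a,b)=0$, so the degree-$\le 1$ part of the Taylor expansion of $g(x,y,z_f(x,y))$ at $(a,b)$ vanishes: this is precisely the statement that $(a,b)$ is a zero of multiplicity at least two. If several components of the divisor happen to pass through $(a,b,c)$, the order of vanishing only increases, so the bound persists. (I note that one should not expect the full second-order jet to vanish: already in \cref{prop:acycl}, where $\wt g\doteq(x+y-1)^2$, the degree-two part is nonzero while $\wt g\in\mf m^2$ holds, so ``multiplicity two'' here means exactly $\wt g\in\mf m^2$.)

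The hard part will be the step from the valuation-theoretic multiplicity along the generic point of $C$ to honest divisibility by $h^2$ in the local ring at the closed point $(a,b,c)$; this is exactly where the UFD property and the principality of height-one primes are used. A secondary technical point is reconciling the formal implicit function $z_f$ with the algebraic local ring, which is why I would pass to the completion from the outset: completion preserves the multiplicity $v_C$ as well as regularity and the UFD structure, and it is the natural ring $\mca R_{\ol\rho}$ in which the conclusion about the zeros of $L_{\bs\rho}$ is to be read.
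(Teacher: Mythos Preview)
Your proof is correct and follows essentially the same approach as the paper's (very terse) argument: the divisor having multiplicity two means that, locally on $\{f=0\}$, the restriction of $g$ lies in the square of the ideal of the curve, and since the point lies on that curve one concludes $\wt g\in\mf m^2$. You supply the details the paper omits---the passage to the completed local ring via the implicit function, the UFD/principality argument that turns the valuation-two condition at the generic point of $C$ into honest divisibility $\wt g=h^2w$ at the closed point---and your clarification that the intended conclusion is $\wt g\in\mf m^2$ (not $\mf m^3$), illustrated by the Whitehead example $(x+y-1)^2$, is a welcome correction to the paper's ambiguous phrasing.
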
 

\begin{proof}
By the relationship between the multiplicity of zeros and the length of a module over a local ring (cf.\,\Cref{subsec:mult}), the condition (i) is equivalent to that the module $\C[x,y,z]_{(x-A,y-B,z-C)}/(f,g)$ has length two as a $\C[x,y,z]_{(x-A,y-B,z-C)}$-module. 
Since the length of the module persists under the completion, 
by $\C[\![x-A,y-B,z-C]\!]/(f,g)\cong \C[\![x-A,y-B]\!]/(\mca{L}(x,y))$, 
we obtain the equivalences\\ 
\ \ \ (i) $\iff$ 
${\rm length}_{\C[\![x-A,y-B]\!]} \C[\![x-A,y-B]\!]/(\mca{L}(x,y))=2$\\ 
\ \ \ $\iff$  $\mca{L}(x,y)\in (x-A,y-B)^2$ and $\mca{L}(x,y)\notin (x-A,y-B)^3$ $\iff$ (ii). 
\end{proof}

\subsection{The universal deformations} 
Let $\pi$ denote the group of $W_{2n-1}$. 
Let $\F$ be any field with characteristic $p>2$ and $\mathcal O$ a CDVR with $\mathcal O/\mf{m}_\mathcal O=\F$. 
As before, let $f_n\in \Z[x,y,z]$ denote the reduced polynomial defining the geometric component of the ${\rm SL}_2\C$-character variety, and put $\ol{f_n}:=f$ mod $p$.  
Then each conjugacy class of absolutely irreducible ${\rm SL}_2\F$-representations
corresponds to a point of a component of the character variety $\ol{f_n}(x,y,z)=0$ in $\F^3$. 
(If $\F$ is a finite field or an algebraically closed field, then the inverse correspondence exists,  by the fact that the Brauer group of a finite field is trivial and \cite[Proposition 3.4]{Marche-RIMS2016}.) 

Let $(\ol{A}, \ol{B}, \ol{C}) \in \F^3$ be a zero of $\ol{f_n}$, 
let $(A,B)\in \mathcal O^2$ be a lift of $(\ol{A},\ol{B})$, and put $\mca{R}=\mathcal O[\![x-A,y-B]\!]$. 
Suppose in addition that $\partial_z  f_n (\ol{A}, \ol{B}, \ol{C})\neq 0$. 
Then, Hensel's lemma for multivariable functions yields the implicit function $z_{f_n}(x,y)$ of $ f_n(x,y,z)=0$ around $(A,B)$ and a natural map $\mathcal O[x,y,z]\to \mathcal O[\![x-A,y-B]\!]; z\mapsto z_{f_n}(x,y)$. 

\begin{lemma} If $\ol{\rho}$ is an absolutely irreducible ${\rm SL}_2\F$-representation at $(\ol{A},\ol{B})$, then $\ol{\rho}$ admits a universal deformation $\bs{\rho}:\pi\to \SL_2\mca{R}$ over $\mathcal O$ with $\mca{R}=\mathcal O[\![x-A,y-B]\!]$. 
\end{lemma} 

\begin{proof}
Riley's representation $\rho^R$ is defined by regarding $\rho$ in \cref{subsec:double} as that over a finite extension of $\mca{O}[x,y,z]$;
\[\rho^R(a)=\spmx{\frac{x+\sqrt{x^2-4}}{2}&1\\0&\frac{x-\sqrt{x^2-4}}{2}}, \ \ 
\rho^R(b)=\spmx{\frac{y+\sqrt{y^2-4}}{2}&0\\u&\frac{y-\sqrt{y^2-4}}{2}},\]
$u=z-\frac{x+\sqrt{x\phantom{y\!\!\!}^2-4}}{2}\frac{y+\sqrt{y^2-4}}{2}-\frac{x-\sqrt{x\phantom{y\!\!\!}^2-4}}{2}\frac{y-\sqrt{y^2-4}}{2}.$
Let ${\bs \rho}^R$ denote the image of $\rho^R$ via an extension of Hensel's map. 
Since the trace of ${\bs \rho}^R$ is a ${\rm SL}_2$-character over $\mca{R}=\mathcal O[\![x-A,y-B]\!]$, 
Nyssen and Carayol's theorems (\cite[Theorem 1]{Nyssen1996}, \cite[Theorem 1]{Carayol1994}) 
yield a representation ${\bs \rho}$ over $\mca{R}$ which is strictly equivalent to ${\bs \rho}^R$. 
By \cite[Theorem 2.2.4]{KMTT2018}, this ${\bs \rho}$ is in fact a universal deformation of $\ol \rho$ over $\mathcal O$. 
\end{proof} 

\begin{remark}
A similar construction of ${\bs \rho}$ may be given by using the tautological representation $\rho^T$ introduced in \cite{Benard2020OJM}, as far as $\Tr \rho^T$ may be regarded as an ${\rm SL}_2$-character over $\mca{R}=\mca{O}[\![x-A,y-B,...]\!]$.
\end{remark} 

\subsection{The $L$-functions} 
Following \cite{KMTT2018}, we define \emph{the $L$-function} $L_{\bs \rho}$ of the universal deformation $\bs{\rho}$ of $\ol{\rho}$ to be the order of the 1st twisted homology group $H_1(\pi, \bs{\rho})$. 
We may easily verify that the 0th twisted homology $H_0(\pi, \bs{\rho})$ is trivial. 
\begin{lemma} \label{lem.Ltau} 
The representation ${\bs \rho}$ is rationally acyclic. 
We have 
\[L_{\bs \rho}\doteq \tau_{\bs \rho}=\tau_n(x,y,z_{f_n}(x,y))\neq 0\]
in $\mca{R}=\mca O[\![x-A,y-B]\!]$, where $\tau_n(x,y,z_{f_n}(x,y))$ denotes the image of $\tau_n(x,y,z)$ via Hensel's map $\Z[x,y,z]\to \mca O[\![x-A,y-B]\!]$, 
and $\doteq$ denotes the equality up to multiplication by units. 
\end{lemma} 

\begin{proof}
The first assertion follows from \cref{prop.nonacyclic}. 
Indeed, on the variety of absolutely irreducible representations over ${\rm Frac}\mca{R}$, the torsion function $\tau$ is defined, and its zero locus corresponds to non-acyclic representations over ${\rm Frac}\mca{R}$. 
Since $\bs{\rho}$ is equivalent to the image of the universal representation via Hensel's map, $\tau_{\bs{\rho}}$ is the image of $\tau_n$. 
The evaluation of $\tau$ at ${\bs \rho}$ yields $\tau_{\bs \rho}=\tau_n(x,y,z_{f_n}(x,y))\neq 0$, so ${\bs \rho}$ is rationally acyclic.

Since the link exterior is an Eilenberg--MacLane space, 
a CW complex of the universal cover is homotopic to 
the free resolution $\mathscr{C}_\ast$ of $\Z$ over $\Z[\pi]$ given by the Fox free derivative 
(cf.\,\cite[Proposition I.4.2, Exercise II.5.2(b)]{Bro}). 
Hence, the twisted homology and the twisted Reidemeister torsion of $\bs{\rho}$ are both calculated by 
the twisted complex $V_{\bs{\rho}}\otimes_{\Z[\pi]}\mathscr{C}_\ast$, 
where $V_{\bs{\rho}}=\mca{R}^2$ is the free $\mca{R}$-module regarded as a right $\Z[\pi]$-module via $\bs{\rho}\circ (g\mapsto g^{-1})$, 
and we naturally have $L_{\bs {\rho}}=L_{\bs {\rho}}/1 \doteq \tau_{\bs{\rho}}$ in $\mca{R}$. This completes the proof.  
\end{proof} 

Now we prove the final assertion stated in \Cref{ss.Lfn}. 
\begin{proof}[Proof of \Cref{theo:L}] 
The torsions of residual representations are given by $\ol{\tau_n}:=\tau_n$ mod $p$. 
By \Cref{lem.Ltau}, we have $L_{\bs \rho}\, {\rm mod}\, \mf{m}_\mca{R}\doteq  \ol{\tau_n}$ in $\F[\![x-\ol{A},y-\ol{B}]\!]$.
Thus, we have $L_{\bs \rho}\,\dot{\neq}\, 1$ iff $\ol{\rho}$ is non-acyclic, that is, $\ol{\rho}$ lies in the intersection of $\ol{\tau_n}=0$ and $\ol{f_n}=0$ in $\F^3$. 

Note that \Cref{lem.mult-two} persists if we replace $\C$ by an algebraic closure $\ol{{\rm Frac}\,\mathcal{O}}$ of ${\rm Frac}\,\mathcal{O}$.  
Since the Taylor expansion of $\tau_n(x,y,z_f(x,y))$ is defined by the formal derivatives, the multiplicities of zeros of the $L$-function $L_\bs \rho$ as a formal series coincide with those in \cref{lem.mult-two}. 
Thus, \cref{lem.mult-two} and \cref{theo:TW} on the multiplicities of common zeros of the character variety $f_n$ and the torsion function $\tau_n$ yield the assertion. 
\end{proof}

\begin{remark} 
By the definition, the notion of multiplicity is independent of the choice of coordinates. 
The assumption $\partial_z \ol{f_n} \neq 0$ in \Cref{theo:L} may be replaced by $\partial_x \ol{f_n} \neq 0$ or $\partial_y \ol{f_n} \neq 0$, 
whereas cases with non-smooth reduction would be of particular interest. 
\end{remark} 

\begin{remark}
In the case of twist knots $J(2,2l)$, we have $L_{\bs \rho}\doteq k_n(x)^2$ in $\mca{R}=O[\![x-A]\!]$ for certain series $k_n(x)\in \Z[x]$ which is independent of the choices of resdual representations $\ol{\rho}$ \cite[Theorem E]{TTU}. 
So, a natural question for $W_1$ is whether 
we have $L_{\bs \rho} \doteq k(x,y)^2$ for some $k(x,y)\in \Z[x,y]$ which is independent of $\ol{\rho}$, and to find interpretations for $(k_n(x))_n$.   
Numerical studies of $L_{\bs \rho}$ for $W_{2n-1}$ would be of further interest. 
\end{remark}

\bibliographystyle{amsalpha} 
\bibliography{BTTU.WhiteheadLinks.v2.arXiv.bbl}

\providecommand{\bysame}{\leavevmode\hbox to3em{\hrulefill}\thinspace}
\providecommand{\MR}{\relax\ifhmode\unskip\space\fi MR }
\providecommand{\MRhref}[2]{%
  \href{http://www.ams.org/mathscinet-getitem?mr=#1}{#2}
}
\providecommand{\href}[2]{#2}
\begin{thebibliography}{KMTT23}

\bibitem[B{\'e}n18]{Benard2018PhD}
L{\'e}o B{\'e}nard, \emph{Reidemeister torsion on character varieties}, Ph.D.
  thesis, Sorbonne Universit{\'e}, 2018.

\bibitem[B{\'e}n21]{Benard2020OJM}
\bysame, \emph{Torsion function on character varieties}, Osaka J. Math.
  \textbf{58} (2021), no.~2, 291--318. \MR{4301316}

\bibitem[BP13]{BakPet}
Kenneth~L. Baker and Kathleen~L. Petersen, \emph{Character varieties of
  once-punctured torus bundles with tunnel number one}, Internat. J. Math.
  \textbf{24} (2013), no.~6, 1350048, 57. \MR{3078072}

\bibitem[Bro82]{Bro}
Kenneth~S. Brown, \emph{Cohomology of groups}, 1982, pp.~x+306. \MR{672956}

\bibitem[Bur67]{Burde1967MathAnn}
Gerhard Burde, \emph{Darstellungen von {K}notengruppen}, Math. Ann.
  \textbf{173} (1967), 24--33. \MR{212787}

\bibitem[Car94]{Carayol1994}
Henri Carayol, \emph{Formes modulaires et repr{\'e}sentations galoisiennes
  {\`a} valeurs dans un anneau local complet {\`a} valeurs dans un anneau local
  complet}, {$p$}-adic monodromy and the {B}irch and {S}winnerton-{D}yer
  conjecture ({B}oston, {MA}, 1991), Contemp. Math., vol. 165, Amer. Math.
  Soc., Providence, RI, 1994, pp.~213--237. \MR{1279611 (95i:11059)}

\bibitem[Cha74]{Chapman}
T.~A. Chapman, \emph{Topological invariance of {W}hitehead torsion}, Amer. J.
  Math. \textbf{96} (1974), 488--497. \MR{391109}

\bibitem[Coh73]{Cohen}
Marshall~M. Cohen, \emph{A course in simple-homotopy theory}, Graduate Texts in
  Mathematics, vol. Vol. 10, Springer-Verlag, New York-Berlin, 1973.
  \MR{362320}

\bibitem[CS83]{CS83}
Marc Culler and Peter~B. Shalen, \emph{Varieties of group representations and
  splittings of {$3$}-manifolds}, Ann. of Math. (2) \textbf{117} (1983), no.~1,
  109--146. \MR{683804}

\bibitem[dR67]{deRham1967EnseignMath}
Georges de~Rham, \emph{Introduction aux polyn\^omes d'un n\oe ud}, Enseign.
  Math. (2) \textbf{13} (1967), 187--194. \MR{240804}

\bibitem[GW19]{GuillouxWill}
Antonin Guilloux and Pierre Will, \emph{On {$\rm SL(3,\Bbb C)$}-representations
  of the {W}hitehead link group}, Geom. Dedicata \textbf{202} (2019), 81--101.
  \MR{4001809}

\bibitem[Har77]{Hartshorne}
Robin Hartshorne, \emph{Algebraic geometry}, Graduate Texts in Mathematics, No.
  52, Springer-Verlag, New York-Heidelberg, 1977. \MR{0463157}

\bibitem[HMP16]{HMP}
Michael Heusener, Vicente Mu{\~n}oz, and Joan Porti, \emph{The {${\rm
  SL}(3,\Bbb{C})$}-character variety of the figure eight knot}, Illinois J.
  Math. \textbf{60} (2016), no.~1, 55--98. \MR{3665172}

\bibitem[HP15]{HP15}
Michael Heusener and Joan Porti, \emph{Representations of knot groups into
  {${\rm SL}_n(\Bbb C)$} and twisted {A}lexander polynomials}, Pacific J. Math.
  \textbf{277} (2015), no.~2, 313--354. \MR{3402353}

\bibitem[Kit94]{Kitano1994TJM}
Teruaki Kitano, \emph{Reidemeister torsion of {S}eifert fibered spaces for
  {${\rm SL}(2;{\bf C})$}-representations}, Tokyo J. Math. \textbf{17} (1994),
  no.~1, 59--75. \MR{1279569}

\bibitem[KMTT18]{KMTT2018}
Takahiro Kitayama, Masanori Morishita, Ryoto Tange, and Yuji Terashima,
  \emph{On certain {$L$}-functions for deformations of knot group
  representations}, Trans. Amer. Math. Soc. \textbf{370} (2018), no.~5,
  3171--3195. \MR{3766846}

\bibitem[KMTT23]{KMTT2023}
\bysame, \emph{On adjoint homological {S}elmer modules for {${\rm
  SL_2}$}-representations of knot groups}, Int. Math. Res. Not. IMRN (2023),
  no.~23, 19801--19826. \MR{4675059}

\bibitem[Mar16]{Marche-RIMS2016}
Julien March\'e, \emph{Character varieties in ${SL}_2$ and {K}auffman {S}kein
  algebras}, Topology, {G}eometry and {A}lgebra of low-dimensional manifolds,
  RIMS K\^{o}ky\^{u}roku,, no. 1991, Res. Inst. Math. Sci. (RIMS), Kyoto, 4
  2916, pp.~27--42.

\bibitem[Maz00]{Mazur2000}
Barry Mazur, \emph{The theme of {$p$}-adic variation}, Mathematics: frontiers
  and perspectives, Amer. Math. Soc., Providence, RI, 2000, pp.~433--459.
  \MR{1754790 (2001i:11064)}

\bibitem[MFP12]{MenalFerrerPorti2012}
Pere Menal-Ferrer and Joan Porti, \emph{Twisted cohomology for hyperbolic three
  manifolds}, Osaka J. Math. \textbf{49} (2012), no.~3, 741--769. \MR{2993065}

\bibitem[Mor12]{Morishita2012}
Masanori Morishita, \emph{Knots and primes}, Universitext, Springer, London,
  2012, An introduction to arithmetic topology. \MR{2905431}

\bibitem[MP06]{MP}
Bruno Martelli and Carlo Petronio, \emph{Dehn filling of the ``magic''
  3-manifold}, Comm. Anal. Geom. \textbf{14} (2006), no.~5, 969--1026.
  \MR{2287152}

\bibitem[MT07]{MorishitaTerashima2007}
Masanori Morishita and Yuji Terashima, \emph{Arithmetic topology after {H}ida
  theory}, Intelligence of low dimensional topology 2006, Ser. Knots
  Everything, vol.~40, World Sci. Publ., Hackensack, NJ, 2007, pp.~213--222.
  \MR{2371728}

\bibitem[MTTU17]{MTTU2017}
Masanori Morishita, Yu~Takakura, Yuji Terashima, and Jun Ueki, \emph{On the
  universal deformations for {${\rm SL}_2$}-representations of knot groups},
  Tohoku Math. J. (2) \textbf{69} (2017), no.~1, 67--84. \MR{3640015}

\bibitem[NT19]{NguyenTran}
Hoang-An Nguyen and Anh~T. Tran, \emph{Twisted {A}lexander polynomials of
  twisted {W}hitehead links}, New York J. Math. \textbf{25} (2019), 1240--1258.
  \MR{4028833}

\bibitem[Nys96]{Nyssen1996}
Louise Nyssen, \emph{Pseudo-repr{\'e}sentations}, Math. Ann. \textbf{306}
  (1996), no.~2, 257--283. \MR{1411348 (98a:20013)}

\bibitem[Sai96]{SaitoK1996}
Kyoji Saito, \emph{Character variety of representations of a finitely generated
  group in {${\rm SL}_2$}}, Topology and {T}eichm\"uller spaces ({K}atinkulta,
  1995), World Sci. Publ., River Edge, NJ, 1996, pp.~253--264. \MR{1659663
  (99k:20011)}

\bibitem[Ser65]{Serre1965ALM}
Jean-Pierre Serre, \emph{Alg\`ebre locale. {M}ultiplicit\'{e}s}, Lecture Notes
  in Mathematics, vol.~11, Springer-Verlag, Berlin-New York, 1965, Cours au
  Coll\`ege de France, 1957--1958, r\'{e}dig\'{e} par Pierre Gabriel, Seconde
  \'{e}dition, 1965. \MR{201468}

\bibitem[Sha13]{Shaf}
Igor~R. Shafarevich, \emph{Basic algebraic geometry. 1}, third ed., Springer,
  Heidelberg, 2013, Varieties in projective space. \MR{3100243}

\bibitem[Tra16]{Tran2016JKTR.CV}
Anh~T. Tran, \emph{Character varieties of {$(-2,2m+1,2n)$}-pretzel links and
  twisted {W}hitehead links}, J. Knot Theory Ramifications \textbf{25} (2016),
  no.~2, 1650007, 16. \MR{3463852}

\bibitem[Tra18]{Tran2018IJM}
\bysame, \emph{The {A}-polynomial 2-tuple of twisted {W}hitehead links},
  Internat. J. Math. \textbf{29} (2018), no.~2, 1850013, 14. \MR{3770936}

\bibitem[TTU22]{TTU}
Ryoto Tange, Anh~T. Tran, and Jun Ueki, \emph{Non-acyclic {$\rm
  SL_2$}-representations of twist knots, {$-3$}-{D}ehn surgeries, and
  {$L$}-functions}, Int. Math. Res. Not. IMRN (2022), no.~15, 11690--11731.
  \MR{4458562}

\end{thebibliography}

\end{document}